\documentclass[a4paper,11pt]{amsart}
\usepackage[T1]{fontenc}
\usepackage[latin9]{inputenc}
\usepackage{geometry}
\usepackage[english]{babel}
\usepackage{amstext}
\usepackage{amsthm}
\usepackage{mathtools}
\usepackage{amssymb}
\usepackage{xcolor}
\usepackage[unicode=true,
 bookmarks=true,bookmarksnumbered=false,bookmarksopen=false,
 breaklinks=false,pdfborder={0 0 1},backref=false,colorlinks=false]
 {hyperref}
 \usepackage{graphicx}
 \usepackage{caption}
 \usepackage{float}
\usepackage[caption = false]{subfig}

\allowdisplaybreaks

\makeatletter
\theoremstyle{plain}
\newtheorem{thm}{Theorem}
\theoremstyle{plain}

\theoremstyle{plain}
\newtheorem{lem}{Lemma}
\newtheorem{cor}{Corollary} 

\makeatother

\newcommand{\p}{\mathbb{P}}

\begin{document}
\title[Asymptotic behavior of the number variance of random samples]{On the asymptotic behavior of the number variance of random points on the unit torus}
\author{Christoph Aistleitner}
\address{TU Graz, Graz, Austria}
\email{aistleitner@math.tugraz.at}
\author{Siegfried H\"ormann}
\address{TU Graz, Graz, Austria}
\email{shoermann@tugraz.at}
\author{Maryna Manskova}
\address{TU Graz, Graz, Austria}
\email{maryna.manskova@tugraz.at}

\begin{abstract} 
The study of the distribution of real sequences in the unit torus has a long and rich history. There are many notions which describe the ``pseudo-random'' behavior of a given deterministic sequence (often of arithmetic origin), such as equidistribution (which describes the distribution on a ``global scale'') or the pair correlation (which is a ``local-scale'' statistics). Another popular and important statistics, which can be applied to all ranges from global via intermediate to local, is the number variance $V_N(s)$, which describes the mean-square fluctuation of the number of elements of an initial segment of the sequence in an interval of length $s$, when moving this interval around the torus. In the terminology of quantum chaology, where such statistics are used to describe the distribution of the energy levels of a system, pseudo-random behavior is called Poissonian behavior. Since the number variance is an important indicator to qualify pseudo-random behavior, it is quite surprising that the literature does not seem to contain a precise description of the distribution and of the almost sure asymptotic behavior of the number variance in the truly random case (i.i.d.\ random points), even in the comparatively simple setup of uniformly distributed points on the unit torus. In the present paper we calculate the distribution of the number variance with high precision throughout a wide range of the parameter $s$, and determine the precise threshold (in terms of $s$) where the ``Poissonian'' asymptotics $V_N(s) \sim N s$ breaks for a typical realization of a random sequence. Key technical input comes from the Koml\'{o}s--Major--Tusn\'{a}dy theorem and the Karhunen-Lo\`eve decomposition. 
\end{abstract}

\maketitle

\begin{center}
    \itshape Dedicated to the memory of Vera T. S\'os (1930-2023).
\end{center}

\section{Introduction and statement of results}
Let $x_1, x_2, \dots$ be a sequence of points in $[0,1]$. For $x \in (0,1)$ and $s \in (0,1)$ we define the counting function 
\begin{equation} \label{S_def}
S_N (x,s) = \# \left\{ 1 \leq i \leq N:~ x_i \in (x, x+s] \right\}.
\end{equation}
Here, and throughout the rest of this paper, we understand all intervals as sub-intervals of the unit torus. In other words, when $x+s \leq 1$ then $(x, x+s]$ is just an ordinary interval, but when $x+s>1$ then we set
$$
(x,x+s] = (x,1] \cup [0,\{x+s\}],
$$
where $\{ \cdot \}$ denotes the fractional part function. Later in the paper, $s$ will be treated as a parameter which approaches zero as $N \to \infty$. Clearly, if the points $x_1, x_2, \dots$ are random, meaning independent, identically distributed (i.i.d.) points that are sampled from a uniform distribution on $[0,1]$, then one can easily calculate that $\mathbb{E} (S_N(x,s)) = Ns$, and accordingly one would expect $S_N(x,s)$ to be typically of asymptotic order $N s$ as $N \to \infty$. The mean-square fluctuations of $S_N(x,s)$ as $x$ moves through $[0,1]$ are described by the number variance of $x_1, \dots, x_N$, which is defined as
\begin{equation} \label{V_def}
V_N(s) = \int_{0}^{1} S_N(x,s)^2  - N^2 s^2 ~dx = \int_{0}^{1} \left(S_N(x,s)  - Ns \right)^2 ~dx.
\end{equation}
The typical order of $V_N$ in the truly random setting is more difficult to see heuristically than that of $S_N$, but one can show with a bit of calculation that for such random points one has $\mathbb{E} \left( V_N(s) \right) = Ns(1-s)$, which means that one would typically expect $V_N(s)$ to be of size roughly $Ns$. In this setup pseudo-random behavior is called ``Poissonian'' behavior, so that a deterministic sequence is said to exhibit Poissonian behavior (with respect to the number variance) if its number variance satisfies $V_N(s) \sim N s$ as $N \to \infty$ (here, and in the sequel, we write $f(x) \sim g(x)$ if $\frac{f(x)}{g(x)} \to 1$ as $x \to \infty$). However, this ``definition'' of Poissonian behavior is actually rather vague, since the number variance is supposed to describe the pseudo-random behavior of a point set on different scales, ranging from highly localized to more global scaling (but usually assuming that $s \to 0$ as $N \to \infty$). Thus one has to clarify in which sense the asymptotic relation $V_N(s) \sim N s$ has to be understood when one simultaneously assumes that $N \to \infty$ and $s \to 0$. It is natural to demand that $V_N(s) \sim N s$ should hold uniformly throughout a large range of possible values of $s$. The key purpose of this paper is to determine precisely what the largest possible range of such values of $s$ is, in the case of a sequence of random points. More precisely, our aim is to determine the maximal size of $(s_N)_{N \geq 1}$ such that $V_N(s) \sim s N$ is true almost surely, uniformly for all $s \in [0, s_N]$, while this asymptotics will fail to be true almost surely for larger values of $s$. It will turn out that this critical size of $s_N$ is a bit below the order of $1/\log \log N$. This is relevant for the characterization of Poissonian behavior of deterministic sequences, since the asymptotic behavior of the number variance of a deterministic sequence should clearly be compared with the asymptotic behavior of a typical (almost sure) realization of a random sequence.\\

The number variance has long been a central object of study in theoretical physics and mathematical physics, particularly within the framework of Quantum Chaos. According to the Bohigas--Giannoni--Schmit (BGS) conjecture, the statistical properties of the energy levels of a quantum system are determined by the dynamics of the corresponding classical system. More specifically, the Berry--Tabor conjecture asserts that the spectral statistics of systems whose classical counterparts are integrable are expected to follow those of a Poisson point process. In this context, the number variance $V_N(s)$ is used to measure the rigidity of the spectrum (see, e.g., the classic treatments by Mehta \cite{mehta} and Haake, Gnutzmann and Ku\'{s} \cite{hgk}). For a Poissonian spectrum, the variance grows linearly with the mean ($V_N(s) \sim Ns$), indicating a lack of correlation between energy levels -- they are free to cluster or gap. In contrast, chaotic systems (such as those modeled by Gaussian ensembles in Random Matrix Theory) exhibit spectral rigidity, where the number variance grows only logarithmically: $V_N(s) \sim c\log(Ns)$, for some suitable constant $c>0$. This reduced variance is a hallmark of so-called level repulsion. We note in passing that a profound connection to number theory arises here through the work of Montgomery \cite{montgomery}, who conjectured that the pair correlation of the non-trivial zeros of the Riemann zeta function follows the GUE (Gaussian Unitary Ensemble) model of Random Matrix Theory. This implies that the zeros of $\zeta(s)$ conjecturally exhibit the same spectral rigidity as the energy levels of a chaotic quantum system; for a comprehensive treatment of this correspondence and its generalizations to other $L$-functions, we refer to Katz and Sarnak \cite{ks_book} and Conrey \cite{conrey}. Since $V_N(s)$ is of a very different order in the integrable resp.\ chaotic setting, the number variance is one of the primary tools used to distinguish these two physical regimes, and serves as a standard benchmark for pseudo-randomness. While these phenomena seem to be quite well-understood at the level of conjectures and expectations, rigorous mathematical results for specific systems are relatively scarce and usually very hard to obtain (among the most important ``mathe\-matical'' papers establishing Poissonian local statistics for specific integrable systems are those of Eskin, Margulis and Mozes \cite{emm}, Marklof \cite{marklof}, Sarnak \cite{sarnak} and VanderKam \cite{vk1,vk2}). It should also be noted that when it comes to the number variance, in large parts of the physics literature the Poissonian baseline $V_N(s) \sim Ns$ seems to be treated in a vague sense as an asymptotic limit in expectation or in distribution, without a rigorous treatment of the precise range of $s$ for which this asymptotic behavior holds or of the (almost sure) fluctuations for a single, typical realization of the system.\\

From a number-theoretic perspective, the study of pseudo-randomness properties such as the number variance and the pair correlation of sequences in $[0,1]$ can be seen as a refinement of the more classical theory of uniform distribution modulo one. Recall that a sequence $(x_n)_{n \geq 1}$ in $[0,1]$ is called uniformly distributed mod 1 (u.d.\ mod 1) if, using the notation from the initial paragraph, 
\begin{equation} \label{ud_def}
\lim_{N \to \infty} \frac{1}{N} S_N(x,s) = s \qquad \text{for all $x$ and $s$}.
\end{equation}
It is important to point out that in contrast to the local statistics discussed before, the test interval $(x,x+s]$ in the u.d.\ mod 1 framework \eqref{ud_def} is understood to remain permanently fixed as $N \to \infty$. There is a quantitative form of the uniform distribution concept called discrepancy, and in the language of this theory the number variance is a form of the $L^2$-discrepancy (see \cite{drti,kn} for basics on u.d.\ mod 1 and discrepancy). A foundational paper of Weyl \cite{weyl} connected the theory of uniform distribution with exponential sums, which allows a relatively easy proof of the fact that the sequence of fractional parts of $(p(n) \alpha)_{n \geq 1}$ is u.d.\ mod 1 if $\alpha \not\in \mathbb{Q}$; here $p$ is a non-constant polynomial with integer coefficients (to simplify the exposition, we will assume in the sequel that all sequences are understood as being reduced modulo 1; in other words, we will omit the phrase ``of fractional parts'' when speaking of real-valued sequences). In particular, the Kronecker sequence $(n \alpha)_{n \geq 1}$ is u.d.\ mod 1 if $\alpha \not\in \mathbb{Q}$. The pair correlation and the number variance of deterministic sequences can in principle also be studied using exponential sums, but this is usually a much more difficult problem than studying uniform distribution, due to the local-scale size of the test intervals, which requires the use of high-frequency terms in a Fourier analysis approach. Small-scale pseudorandomness is often seen as a ``finer'' property than large-scale pseudorandomness -- cf.\ for example the fact that Poissonian pair correlation implies uniform distribution mod 1, but not vice versa \cite{alp,gl,mark_man,stein}. In particular, the sequence of fractional parts $(n \alpha)_{n \geq 1}$ is u.d.\ mod 1 for $\alpha \not\in \mathbb{Q}$, but fails to have Poisson pair correlation \cite{ls} or Poissonian number variance \cite{ay} for any $\alpha$. The situation is expected to change for higher-degree polynomials. The Berry--Tabor conjecture implies that for a generic integrable system, such as a sequence $(n^d \alpha)_{n \geq 1}$ with $d \geq 2$ and ``unproblematic'' (in particular irrational) $\alpha$, the local fluctuations should follow a Poisson point process. A major breakthrough in this direction was achieved by Rudnick and Sarnak \cite{rs}, who proved that for any polynomial of degree $d \geq 2$, the pair correlation $(n^d \alpha)_{n \geq 1}$ is Poissonian for almost all $\alpha$. A fairly general metric result was later obtained by Aistleitner, Larcher and Lewko \cite{all}. However, a significant gap remains: while these and related results give a relatively complete picture of Poissonian behavior in the metrical sense (i.e., for a set of $\alpha$ of full Lebesgue measure), the problem remains wide open for any \textit{specific} individual value of $\alpha$ such as $\alpha = \sqrt{2}$, see \cite{hb,my,rsz}. The problem for the number variance is less studied and much less understood in the number-theoretic context than the problem for the pair correlation. Li and Yesha \cite{ly} obtained metric results which guarantee that the number variance of $(a_n \alpha)_{n \geq 1}$ is Poissonian for almost all $\alpha$, subject to a condition on the so-called additive energy of the initial segments of an integer sequence $(a_n)_{n \geq 1}$. Interestingly, stronger assumptions on the additive energy allow for a larger range of $s$ for which the asymptotics $V_N(s) \sim N s$ holds. The results of Li and Yesha were improved by Aistleitner and Yesha \cite{ay}, who also obtained metric results for general sequences $(a_n \alpha)_{n \geq 1}$, and in particular proved that the number variance of $(n^d \alpha)_{n \geq 1}$ is Poissonian for almost all $\alpha$, uniformly in the range $s \in [0, (\log N)^{-c}]$ for a suitable $c$.\\

The question concerning the maximal possible range of $s$, throughout which the asymptotics $V_N(s) \sim Ns$ is true for a generic realization of a sequence, arose from the work in \cite{ay}. There it was noted that this maximal possible range of $s$ seems to be unknown even in the i.i.d.\ random case, which should have served as the natural reference point for any deterministic sequence (Marklof's paper \cite{mark_nato} is a valuable source for the asymptotic behavior of the number variance, and the relation between number variance and pair correlation; however, the perspective taken there is rather the perspective of distributions, not of almost sure behavior). It was noted in \cite{ay} that the almost sure asymptotic $V_N(s) \sim N s$ cannot possibly be true throughout the full theoretically possible range $s \in [0,1/2]$ for a typical realization of the sequence, since for large $s$ (close to constant size) the random fluctuations become too large too frequently. More precisely, in \cite{ay} it was indicated (without formal proof) that for an i.i.d.\ random sequence of points it is true almost surely that 
$$
V_N(s) \sim N s \qquad \text{as $N \to \infty$, uniformly for $s \in [0, (\log N)^{-c}]$},
$$
where $c$ is a suitable constant, and a proof was given there that for such a sequence almost surely there are infinitely many $N$ such that
$$
V_N(s) \geq 1.001 N s \qquad \text{for $s = \frac{35}{\log \log N}$.}
$$
We will discuss a bit later why the asymptotics $V_N(s) \sim N s$ becomes more difficult when $s$ is larger. The main purpose of the paper is to determine the precise threshold where, for an i.i.d.\ random sequence, the asymptotics $V_N(s) \sim N s$ starts to fail for a generic realization. As a key ingredient for the proof of the almost sure behavior of $V_N(s)$ we also establish a very precise formula for the distribution of $V_N(s)$ for given $N$ and $s$.

\begin{thm} \label{th_1}
For $N \geq 1$, let $X_1, \dots, X_N$ be i.i.d.\ random variables having uniform distribution on $[0,1]$. Let $V_N(s)$ denote their number variance, as defined above. Then there exists a random variable $E_{N,s}$ such that the following holds. The distribution of $\frac{V_N(s)}{N}$ is the same as that of 
$$
\sum_{k=1}^\infty \lambda_k Z_k + E_{N,s},
$$
where $(Z_k)_{k \geq 1}$ are i.i.d.\ exponentially distributed random variables with parameter 1, and 
$$
\lambda_k = \frac{2 \sin^2(\pi ks)}{\pi^2 k^2 }, \quad k \geq 1.
$$
Furthermore, there is an absolute constant $c$ such that for $E_{N,s}$ we have
$$
\mathbb{P} \left( \left| E_{N,s} \right| \geq c \max \left\{ \frac{\sqrt{s} (\log N)^{3/2}}{\sqrt{N}}, \frac{(\log N)^2}{N} \right\} \right) = \mathcal{O} \left( N^{-100} \right) \qquad \text{as $N \to \infty$.}
$$
\end{thm}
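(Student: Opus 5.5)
Write $F_N(t)=\frac1N\#\{i\le N: X_i\le t\}$ and $\alpha_N(t)=\sqrt N(F_N(t)-t)$ for the uniform empirical process. The plan is to express $V_N(s)/N$ as a quadratic functional of $\alpha_N$, replace $\alpha_N$ by a Brownian bridge via the Koml\'os--Major--Tusn\'ady (KMT) theorem, and compute the law of the resulting Gaussian quadratic form by a Fourier (Karhunen--Lo\`eve type) expansion.

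\textbf{Step 1: reduction to the empirical process.} For $x\in[0,1]$ we have $S_N(x,s)-Ns=\sqrt N\,(\alpha_N(x+s)-\alpha_N(x))$, where $\alpha_N$ is extended 1-periodically. This extension is consistent since $\alpha_N(0)=\alpha_N(1)=0$, and it handles the wrap-around intervals correctly. Hence
$$
\frac{V_N(s)}{N}=\int_0^1\bigl(\alpha_N(x+s)-\alpha_N(x)\bigr)^2\,dx .
$$

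\textbf{Step 2: coupling.} By KMT, on a suitable probability space there is a Brownian bridge $B$ with
$$
\sup_t|\alpha_N(t)-B(t)|\le C\,\frac{\log N}{\sqrt N}
$$
except on an event of probability $O(N^{-100})$; the constant $C$ is enlarged to reach the exponent $100$. Set $D(x)=\alpha_N(x+s)-\alpha_N(x)$ and $G(x)=B(x+s)-B(x)$, with $B$ extended periodically. Then
$$
\int D^2-\int G^2=2\int G(D-G)+\int (D-G)^2 .
$$
The second term is at most $4C^2(\log N)^2/N$. The first is bounded by $2\|D-G\|_\infty\int|G|\le 4C\frac{\log N}{\sqrt N}\bigl(\int G^2\bigr)^{1/2}$. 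The quantity $\int G^2$ is the main term, so we need a tail bound for it: its mean is $s(1-s)$, and it is a Gaussian quadratic form with eigenvalues $\lambda_k$ (Step 3). Hanson--Wright/Laurent--Massart then gives $\int G^2\le s+C'\bigl(\sqrt{s\log N}+\log N\cdot\max_k\lambda_k\bigr)$ with probability $1-O(N^{-100})$. Since $\max_k\lambda_k\le\sum_k\lambda_k=s(1-s)$ and $\sum_k\lambda_k^2\lesssim s^3$, this yields $\int G^2\lesssim s\log N+(\log N)/\ldots$. Combining the terms gives the stated error $\max\{\sqrt s(\log N)^{3/2}/\sqrt N,\,(\log N)^2/N\}$, with $E_{N,s}:=\int D^2-\int G^2$. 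Only equality in distribution is claimed, so working on the coupled space is legitimate.

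\textbf{Step 3: law of $\int G^2$.} Expand $B$ on $[0,1]$ as
$$
B(t)=\sum_{k\ge1}\frac{\sqrt2}{\pi k}\bigl(\xi_k(\cos 2\pi kt-1)/\sqrt2+\eta_k\sin 2\pi kt/\sqrt2\bigr)+\dots
$$
An equivalent and cleaner route is to note that the periodic increment $G$ is a stationary Gaussian process whose Fourier coefficients $\hat G(k)=\hat B(k)(e^{2\pi iks}-1)$ are independent complex Gaussians for $k\ge1$. This uses that $B$ minus its mean has independent Fourier coefficients with $\mathbb E|\hat B(k)|^2=1/(4\pi^2k^2)$, and that the constant mode cancels in $G$. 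By Parseval,
$$
\int G^2=2\sum_{k\ge1}|\hat B(k)|^2\,|e^{2\pi iks}-1|^2=\sum_k\frac{8\sin^2(\pi ks)}{4\pi^2k^2}\cdot\frac{|\xi_k|^2+|\eta_k|^2}{2}
$$
with $\xi_k,\eta_k$ i.i.d.\ standard normal. Since $(\xi_k^2+\eta_k^2)/2$ is $\mathrm{Exp}(1)$, this equals $\sum_k\lambda_kZ_k$ with $\lambda_k=2\sin^2(\pi ks)/(\pi^2k^2)$. As a sanity check, $\sum_k\lambda_k=s(1-s)$, which matches the known mean.

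\textbf{Main obstacle.} The delicate point is the cross term in Step 2: getting the factor $\sqrt s$ rather than $1$ requires a high-probability bound $\int G^2\lesssim s\log N$ uniformly in small $s$. This needs care when $s<(\log N)/N$, where the $(\log N)^2/N$ term dominates anyway. I would handle it with the exponential tail bound for $\sum\lambda_kZ_k$ via $\mathbb E e^{\theta\sum\lambda_kZ_k}=\prod_k(1-\theta\lambda_k)^{-1}$, taking $\theta\asymp1/\max_k\lambda_k$, together with the bound $\max_k\lambda_k\le 2s^2\cdot\min(1,1/(\pi^2 s^2))\ldots\le s$.
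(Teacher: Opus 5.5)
Your proposal is correct and is essentially the paper's argument: a KMT coupling, the error expansion controlled by $\|D-G\|_\infty\bigl(\int G^2\bigr)^{1/2}$ and $\|D-G\|_\infty^2$, a stationary Fourier/Karhunen--Lo\`eve computation giving $\int G^2\stackrel{d}{=}\sum_k\lambda_kZ_k$ (you obtain it from $\hat G(k)=\hat B(k)(e^{2\pi iks}-1)$ and Parseval rather than from the covariance kernel of $G$, an equivalent and slightly more transparent route), and a Chernoff bound through $\prod_k(1-\theta\lambda_k)^{-1}$ giving $\int G^2\lesssim s\log N$ off an event of probability $O(N^{-100})$, which the paper does with $t=1/(3s^2)$. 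The one slip is the middle term of your Laurent--Massart bound: it should be $\bigl(\sum_k\lambda_k^2\log N\bigr)^{1/2}\lesssim s^{3/2}\sqrt{\log N}$, not $\sqrt{s\log N}$, which would spoil the rate for small $s$; the MGF argument in your last paragraph already bypasses this.
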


Note that in the statement of the theorem, the sum $\sum_{k=1}^\infty \lambda_k Z_k = \sum_{k=1}^\infty \lambda_k(s) Z_k$ depends on the value of $s$, but it does not depend on $N$. The influence of $N$ on the distribution of $V_N(s)/N$ is entirely incorporated into the error term $E_{N,s}$. Vaguely speaking, the theorem thus asserts that the distribution of $V_N(s)/N$ is asymptotically essentially the same as that of $\sum_{k=1}^\infty \lambda_k(s) Z_k$ (at least as long as $s$ is of order at least $N^{-1+\varepsilon}$, say; for very small values of $s$ we will see that the error term can become dominant). 

\begin{thm} \label{th_2}
Let $(X_n)_{n \geq 1}$ be a sequence of i.i.d.\ random variables having uniform distri\-bution on $[0,1]$. Let $V_N(s)$ be the number variance of $X_1, \dots, X_N$. Let $g(N)$ be any positive function which decreases monotonically towards $0$. Then almost surely
$$
\frac{V_N(s)}{Ns} \to 1, \qquad \text{uniformly throughout the range } 0 \leq s \leq \frac{g(N)}{\log \log N}.
$$
On the other hand, for any fixed $\varepsilon > 0$, there exists an $\eta>0$ such that 
$$
 \left| \frac{V_N(s)}{Ns} - 1 \right| > \eta \qquad \text{for $s = \frac{\varepsilon}{\log \log N}$ and for infinitely many $N$}.
$$
\end{thm}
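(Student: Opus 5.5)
The plan is to deduce Theorem~\ref{th_2} from Theorem~\ref{th_1} together with tail estimates for the weighted exponential sum $W(s)=\sum_k \lambda_k(s) Z_k$, and then pass from a discrete grid of values of $(N,s)$ to all of them.

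\textbf{Moments and tails of $W(s)$.} First I record the basic facts. We have $\sum_k \lambda_k(s) = s(1-s)$, which matches $\mathbb{E}(V_N(s)/N)$. The largest eigenvalue is $\lambda_1 \approx 2s^2$ for small $s$, and more generally $\lambda_k \le \min(2s^2, 2/(\pi^2k^2))$, so $\sum_k \lambda_k^2 \asymp s^3$. The Laplace transform $\mathbb{E} e^{tW}=\prod_k(1-t\lambda_k)^{-1}$, valid for $t<1/\lambda_1$, then gives Bernstein-type bounds
$$
\mathbb{P}\bigl(|W-s(1-s)|>\eta s\bigr)\le 2\exp\bigl(-c\min(\eta^2 s^2/s^3,\ \eta s/s^2)\bigr)=2\exp(-c\,\eta'/s),
$$
with $\eta'=\min(\eta,\eta^2)$. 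For the matching lower bound I use only the single term $\lambda_1 Z_1$. Since the $Z_k$ are independent and the remaining sum concentrates, $\mathbb{P}(W>(1+\eta)s)\ge \mathbb{P}(Z_1>C\eta/s)/2 = \tfrac12 e^{-C\eta/s}$ for a suitable $C$.

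\textbf{Upper bound, the almost sure statement.} Take $s\le g(N)/\log\log N$, so that $1/s\ge \log\log N/g(N)$. I restrict to geometric blocks $N_j=\lfloor(1+\delta)^j\rfloor$ and to a grid of values of $s$ that is polynomial in $N_j$, with spacing $N_j^{-2}$. The tail bound gives probability at most $\exp(-c\eta'\log\log N_j/g(N_j)) = (j\log(1+\delta))^{-c\eta'/g(N_j)}$, which is summable in $j$ because $g\to0$. The polynomially many grid points are not affordable against this. So I split the range of $s$. For $s\ge (\log N)^{-3}$ the grid can be taken of size $(\log N)^{C}$, using the monotonicity of $S_N(x,s)$ in $s$ together with $|V_N(s)-V_N(s')|\lesssim N|s-s'|\cdot(\text{sup discrepancy})$; the number of points then costs only a factor $e^{C\log\log N}$, which is absorbed since $1/(sg)\gg\log\log N$. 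For small $s$ the tails are $e^{-c/s}\le N^{-10}$, already summable. The error $E_{N,s}$ from Theorem~\ref{th_1} is negligible relative to $s$ once $s\ge N^{-1+\epsilon}$. For $s\le N^{-1+\epsilon}$ I argue directly instead. For tiny $s$ one has $V_N(s)=Ns-Ns^2+2\int\sum_{i\ne j}\mathbf 1(\cdots)$, and the pair-overlap term $\sum_{i\neq j}(s-\|X_i-X_j\|)_+$ is $o(Ns)$ almost surely by a standard Bernstein/union bound. Interpolating between blocks $N_j\le N<N_{j+1}$ is done by monotonicity of $S_N$ in $N$, at the cost of $O(\delta)$, and $\delta$ is then sent to $0$.

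\textbf{Lower bound.} Take $s_N=\varepsilon/\log\log N$ and the sparse sequence $N_j=j^{j}$, or $e^{j^2}$. For this sequence $\log\log N_j\approx \log j$, so the probability that $W(s_{N_j})>(1+\eta)s$ is at least $\tfrac12 e^{-C\eta\log\log N_j/\varepsilon}\approx j^{-C\eta/\varepsilon}$. This is non-summable once $\eta<\varepsilon/C$. The events are not independent, so I use the second Borel--Cantelli lemma on approximately independent pieces. The number variance of $X_1,\dots,X_{N_j}$ is dominated by the fresh points $X_{N_{j-1}+1},\dots,X_{N_j}$, because $N_{j-1}=o(N_j)$. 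The contribution of the old points to $S_N-Ns$ is $O(\sqrt{N_{j-1}\log})$, hence $o(\sqrt{N_j s})$ in $L^2$. Applying Theorem~\ref{th_1} to the fresh block yields independent events whose probabilities are not summable, which gives infinitely many $N$ with $V_N(s)\ge(1+\eta)Ns$.

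\textbf{Main obstacle.} I expect the main obstacle to be the uniformity in $s$ in the upper bound: controlling the supremum over a continuum of $s$ with only $\log\log$-size slack in the exponent, so that the discretization has to be done carefully. The fallback is to control $\sup_s$ through the single Brownian-bridge approximation (KMT) of the empirical process, which is simultaneous in $s$.
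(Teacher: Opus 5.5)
\textbf{There is a genuine gap in the convergence part.} It sits in the range $(\log N)^{-3}\le s\le g(N)/\log\log N$. There your tail bound is only about $(\log N)^{-c\eta'/g(N)}$ per grid point, so both the $s$-grid and the $N$-grid must have polylogarithmic size per dyadic block. Neither of your interpolation devices allows this.

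\emph{Interpolation in $s$.} The inequality $|V_N(s)-V_N(s')|\lesssim N|s-s'|\,\|D_N\|_\infty$ is correct with the unnormalised discrepancy, where $\|D_N\|_\infty\asymp\sqrt N$. But it is $o(Ns)$ only when $|s-s'|=o(s/\sqrt N)$. That forces roughly $\sqrt N$ grid points, which is exactly what you said you cannot afford.

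\emph{Interpolation in $N$.} Monotonicity of $S_N$ in $N$ gives $V_N(s)\le V_{N'}(s)+(N'^2-N^2)s^2$. With $N'=(1+\delta)N$ the error is $\approx 2\delta N^2s^2=(2\delta Ns)\cdot Ns$, a relative error of order $\delta Ns$, not $O(\delta)$. This is useless as soon as $Ns\to\infty$. Monotone sandwiching works for first-order quantities like $S_N/(Ns)$. It fails for $V_N$, a fluctuation of size $Ns$, because the shift $S_{N'}-S_N\approx\delta Ns$ dwarfs $\sqrt{Ns}$.

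\textbf{The missing idea is to split via Cauchy--Schwarz and bootstrap from smaller scales.}
\begin{itemize}
\item Splitting $(x,x+s]$ at $x+s'$ gives $|V_N(s)-V_N(s')|\le V_N(h)+2\sqrt{V_N(s')V_N(h)}$ with $h=|s-s'|$. Since $V_N(h)\approx Nh$ is already known almost surely from the small-$s$ regime, a spacing $h=o(s)$ suffices, e.g.\ $k^{-4}$ for $N\approx 2^k$.
\item Splitting the points into $X_1,\dots,X_N$ and $X_{N+1},\dots,X_M$ gives $|V_M(s)-V_N(s)|\le V_{N+1,M-N}(s)+2\sqrt{V_N(s)V_{N+1,M-N}(s)}$. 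Take an $N$-grid of spacing $2^k/k^4$. The fresh-block variance can then be bounded crudely by $4\|D\|_\infty^2\lesssim r\max_x B(x)^2\le s^{3/2}2^k$. This fails with probability at most $2e^{-s^{3/2}k^4}$, by Kolmogorov's distribution. The cross term is then $O(Ns^{5/4})=o(Ns)$.
\item This needs only $O(k^8)$ grid pairs per dyadic block, which $(\log N)^{-c\eta'/g(N)}$ pays for since $g\to0$.
\end{itemize}

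\textbf{Your fallback does not close the gap.} KMT plus a bridge modulus-of-continuity estimate could give uniformity in $s$. It does nothing for the interpolation in $N$, because the coupled bridges $B_N$ for different $N$ are unrelated.

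\textbf{Microscopic range.} The pair term must be centred. $\sum_{i\ne j}(s-\|X_i-X_j\|)_+$ has mean $N(N-1)s^2$, which is not $o(Ns)$ once $s\gg1/N$. What is $o(Ns)$ is the centred degenerate $U$-statistic, whose variance is $\asymp N^2s^3$.

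\textbf{Your divergence part is sound and takes a different, simpler route than the paper.} With super-geometric blocks, $V_{N_{j-1}}(s_j)$ is negligible outright. The $L^2$ triangle inequality $\sqrt{V_{N_j}}\ge\sqrt{V_{N_{j-1}+1,N_j-N_{j-1}}}-\sqrt{V_{N_{j-1}}}$ then finishes the argument. This avoids the paper's geometric blocks $\theta^j$ and its dichotomy via the events $B_j$, including the switch from $s_j$ to $s_{j-1}$, which in the paper relies on the convergence part. Your single-eigenvalue lower bound $\tfrac12e^{-C\eta/s}$ is weaker in $\eta$ than the Paley--Zygmund bound $e^{-17\eta^2/s}$, but it is more elementary and enough.

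\textbf{Choose $N_j=e^{j^2}$ rather than $j^j$.} For $N_j=j^j$ one has $N_{j-1}/N_j\asymp1/j$. A Dvoretzky--Kiefer--Wolfowitz-type bound $\|D_{N_{j-1}}\|_\infty^2=O(N_{j-1}\log N_{j-1})$ then gives only $O(N_j\log j)$. That is not $o(N_js_j)=o(N_j/\log j)$. The Chung--Smirnov law of the iterated logarithm would rescue it, but $e^{j^2}$ avoids the issue.
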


The reason why the asymptotics $V_N(s) \sim Ns$ becomes significantly more difficult to control as $s$ increases can be explained by looking at the variance of the number variance of a random sample. Above we already noted that for random points we have $\mathbb{E} (V_N(s)) = N s (1-s)$, which gives the asymptotic order $N s$ for small values of $s$; see equation \eqref{calculation_exp}. Our calculations will also show that the variance of $V_N(s)$ is of order $N^2 s^3$, see \eqref{variance_bound}. This means that $V_N(s)$ has a distribution which is concentrated around $Ns$ with fluctuations typically being of the size of the standard deviation, i.e.\ of order $N s^{3/2}$. This implies that $V_N(s)$ has better concentration around its mean when $s$ is smaller, making the problem of establishing $V_N(s) \sim Ns$ increasingly more difficult when $s$ becomes larger. The typical fluctuations of order $N s^{3/2}$  are almost surely infinitely often enhanced by further ``rare event'' fluctuations coming from the tail probabilities in the distribution of $V_N(s)$. When naively presuming a law of the iterated logarithm for these additional fluctuations, one should expect $V_N(s)$ to deviate from its average value by $\pm N s^{3/2} \sqrt{\log \log N}$ infinitely often, almost surely. To maintain the convergence $V_N(s) \sim Ns$, one would accordingly need $N s^{3/2} \sqrt{\log \log N} = o(Ns)$, or written in simplified form, $s = o(1/\log \log N)$. This gives a heuristic explanation why the critical threshold for the almost sure asymptotics should be expected at $s = o(1 / \log \log N)$. However, actually proving this heuristic is rather delicate, since the argument requires precise estimates for the tail probabilities for the distribution of $V_N(s)$; recall here that $V_N(s)$ is not just a sum of i.i.d.\ random variables, but that it is actually a functional (a square integral) of such a sum. An additional challenge lies in the dependence structure of $V_N(s)$ for different values of $s$ and of $N$.\\

\section*{Dedication}

This paper is dedicated to the memory of Vera T. S{\'o}s, whose foundational contributions to combinatorial and geometric discrepancy theory and the theory of uniform distribution modulo 1 continue to shape the field. A central theme in her work was the study of the fine-scale structure of sequences modulo one, and questions about the limitations of approximating continuous by discrete structures. Most famously, her proof of the ``Three Gaps Theorem'' \cite{sos} provided a complete description of the remarkable rigidity of the Kronecker sequence $(n \alpha)_{n \geq 1}$, showing that the gaps between successive elements can take at most three values -- this is closely connected to our discussion of the pair correlation / number variance of $(n \alpha)_{n \geq 1}$ above (she also wrote several other papers on Kronecker sequences, such as \cite{soskron}, which precisely quantifies the discrepancy of $(n \alpha)_{n \geq 1}$ in terms of the continued fraction expansion of $\alpha$). She had a deep interest in possible ways of qualifying and quantifying pseudo-random behavior, and the application of such pseudo-randomness tests to arithmetic or combinatorial structures; see in particular \cite{becksos,ls,ss}. Vera S{\'o}s was one of the most prominent representatives of the Hungarian school which linked combinatorics and probability theory in the second half of the twentieth century. Two of the authors of this paper (CA and SH) were raised by their PhD supervisor Istv\'an Berkes in the spirit of this magnificent Hungarian school. They wish to take this opportunity to thank Istv\'an for his dedication and support during their formative academic years.

\section{The KMT approximation} \label{sec_KMT}

We will use the famous Koml\'{o}s--Major--Tusn\'{a}dy theorem from empirical process theory. The result originally appeared in \cite{KMT1,KMT2}; for the wider framework we refer to \cite{CR} and \cite{CH}. In the statement of the result, and throughout the rest of the paper, we write $\mathbf{1}_A(x)$ for the indicator function of a set $A$.

\begin{lem}[Koml\'os--Major--Tusn\'ady approximation] \label{lem_kmt}
There exist absolute constants $c_1$, $c_2$, $c_3$, such that the following holds. For $N \geq 1$, let $X_1, \dots, X_N$ be i.i.d.\ random variables having uniform distribution on $[0,1]$. By possibly enlarging the underlying probability space,  there is a Brownian bridge $B_N(x),0 \leq x \leq 1$, such that for all $u > 0$
$$
\mathbb{P} \left( \sup_{0 \leq x \leq 1} \left| \left( \sum_{n=1}^N \mathbf{1}_{[0,x]}(X_n) - N x \right) - \sqrt{N} B_N(x) \right| > c_1 \log N + u \right) \leq c_2 e^{-c_3 u}.
$$
\end{lem}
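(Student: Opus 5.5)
The plan is the classical dyadic (``Hungarian'') construction of Koml\'os, Major and Tusn\'ady. Without loss of generality $N=2^m$; general $N$ only changes the constants. We build the Brownian bridge $B_N$ and the empirical process $F_N(x)=\sum_{n\le N}\mathbf{1}_{[0,x]}(X_n)$ on a common probability space, top down along the dyadic tree of intervals $I_{j,k}=(k2^{-j},(k+1)2^{-j}]$, $0\le j\le m$. We use the same tree for both processes:
\begin{itemize}
\item Given that $I_{j,k}$ contains $n$ sample points, the number in its left child is $\mathrm{Bin}(n,1/2)$, and these splits are conditionally independent across the tree.
\item Given the bridge increment $\sqrt N\,(B_N((k+1)2^{-j})-B_N(k2^{-j}))=b$ over $I_{j,k}$, the increment over the left child equals $b/2+\tfrac12\sqrt{N2^{-j}}\,\xi$ with $\xi$ standard normal, and these $\xi$'s are also independent across the tree.
\end{itemize}
We couple each binomial split to its Gaussian variable $\xi_{j,k}$ by the quantile transform. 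Once the dyadic skeleton is fixed, the bridge is completed between grid points by independent Brownian bridges, and the points are placed uniformly and independently inside the finest cells. This gives exactly $N$ i.i.d.\ uniform points and an exact Brownian bridge.

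The key one-step input is Tusn\'ady's lemma. If $\mathrm{Bin}(n,1/2)$ is quantile-coupled to $Z\sim N(0,1)$, then
$$\Bigl|\mathrm{Bin}(n,\tfrac12)-\tfrac n2-\tfrac{\sqrt n}{2}Z\Bigr|\le C\,(1+Z^2).$$
Its proof is a careful comparison of binomial and normal tail functions (Stirling plus a sharp local CLT near the center, crude bounds far out). I expect this to be the main obstacle, together with the bookkeeping of the next step.

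Granting the lemma, we control the error at grid points. Let $D_{j,k}$ be the discrepancy between the count and $\sqrt N$ times the bridge increment over $I_{j,k}$. A child's discrepancy equals half its parent's discrepancy plus two more terms:
\begin{itemize}
\item a Tusn\'ady error of size $\le C(1+\xi_{j,k}^2)$;
\item a term coming from the fact that the binomial parameter is the random count $n$ rather than its mean $N2^{-j}$. Here one uses that $\sqrt n-\sqrt{N2^{-j}}$ is controlled by $D$ and by the centered count, and that the centered count is itself roughly $\sqrt{N2^{-j}}$ times a normal.
\end{itemize}
Any dyadic point $x=k2^{-m}$ is the endpoint of a disjoint union of at most $m$ dyadic intervals, one per level. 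Hence $F_N(x)-Nx-\sqrt N B_N(x)$ is a sum of $O(m)$ terms, each dominated by a constant times $1+\xi^2$ plus geometrically damped contributions from ancestors. Since $\mathbb{E}e^{t\xi^2}<\infty$ for $t<1/2$, this sum has an exponential moment of order $e^{C m}$ for small $t$. Chebyshev's exponential inequality then gives tail $e^{C\log N - c u}$ at each grid point, and a union bound over the $N$ grid points costs one more $\log N$, which is absorbed into $c_1\log N$.

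It remains to pass from the grid to all $x$. On a cell of length $1/N$, the count is a sum of Bernoulli$(1/N)$ variables, so it exceeds $c\log N+u$ with probability $\le e^{-c u}/N^2$; the drift $Nx$ moves by at most $1$. The oscillation of $\sqrt N B_N$ over the cell is $\sqrt N$ times a quantity of order $N^{-1/2}$ with Gaussian tails, hence $O(\sqrt{\log N}+u)$ uniformly over the $N$ cells with tail $e^{-cu}$. Combining the three bounds yields the stated inequality with suitable absolute constants $c_1,c_2,c_3$.
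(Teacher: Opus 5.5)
The paper does not prove this lemma. It imports the Koml\'os--Major--Tusn\'ady theorem as a black box from \cite{KMT1,KMT2}, and points to \cite{CR,CH} for the framework. You instead reconstruct the classical dyadic (Hungarian) proof that stands behind those references, and your outline is correct. Its ingredients are:
\begin{itemize}
\item L\'evy's midpoint construction of the bridge;
\item quantile coupling of the $\mathrm{Bin}(n,\tfrac12)$ splits to the midpoint Gaussians;
\item Tusn\'ady's inequality;
\item an exponential-moment bound along a root-to-leaf path, whose $\xi$'s are independent;
\item a union bound over the $N$ grid points;
\item interpolation inside cells of length about $1/N$.
\end{itemize}

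The citation buys full rigour; the later Bretagnolle--Massart refinement also gives explicit constants. Your route shows where $c_1\log N$ comes from: the depth of the tree plus the union bound. It also isolates Tusn\'ady's lemma as the only hard analytic input, which you assert rather than prove.

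Two points in your bookkeeping need care.

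\textbf{Bound $|\sqrt{n}-\sqrt{\bar n}|$ through the counts alone, not through $D$.} Write $a_l=|\sqrt{n_l}-\sqrt{\bar n_l}|$, where $n_l$ and $\bar n_l=N2^{-l}$ are the actual and expected counts at the level-$l$ node of the path. For $U\sim\mathrm{Bin}(n,\tfrac12)$ quantile-coupled to $Z$, Tusn\'ady's bound implies $|U-\tfrac n2|\le 1+\sqrt n\,|Z|$. Hence $a_{l+1}\le\sqrt2\,(1+|\xi_l|)+a_l/\sqrt2$ with $a_0=0$. Then $\tfrac12 a_l|\xi_l|\le\tfrac14(a_l^2+\xi_l^2)$ is dominated by geometrically damped sums of $1+\xi_r^2$ over the independent $\xi_r$ on the path, exactly as you claim. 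If you instead use $|\sqrt n-\sqrt{\bar n}|\le(|D|+|b|)/\sqrt{\bar n}$, you create a multiplicative feedback term $D\xi/\sqrt{\bar n}$. That term is not contractive at the finest levels, where $\bar n=O(1)$.

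\textbf{The reduction and the form of the statement.} ``Without loss of generality $N=2^m$'' is not a valid reduction as stated, but it is also unnecessary: the same tree of depth $\lceil\log_2 N\rceil$ works for every $N$. The lemma is phrased as building $B_N$ on an extension of a space that already carries $X_1,\dots,X_N$. This follows from your joint construction after a uniformly random relabelling of the points and the standard transfer (conditional-distribution) argument.
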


We now use the KMT approximation to relate the distri\-bution of $V_N(s)$ to the distri\-bution of a certain (square-integral) functional of the Brownian bridge. Let $N \geq 1$ and $s>0$ be given. Let $X_1, \dots, X_N$ be i.i.d.\ random variables having uniform distribution on $[0,1]$. Then according to Lemma \ref{lem_kmt}, after possibly enlarging the underlying probability space, there exists a Brownian bridge process $B_N(x)$ and a stochastic process $D_N(x)$ such that
$$
D_N(x) = \left( \sum_{n=1}^N \mathbf{1}_{[0,x]}(X_n) - N x \right) - \sqrt{N} B_N(x),
$$
and such that for some absolute constant $c$ we have
\begin{equation} \label{g_size}
\mathbb{P} \left( \sup_{0 \leq x \leq 1} |D_N(x)| \geq c \log N \right) \leq N^{-100},
\end{equation}
say, for all sufficiently large $N$ (the exponent $100$ here is of course completely arbitrary; the only relevant point is that the exceptional probabilities arising from this step contribute to a convergent series when summing over $N$ in the application of the Borel--Cantelli lemma). Since in this paper we always interpret $[0,1]$ as the unit torus, it is natural for us to think of $B_N$ as a periodic function. This is unproblematic since for the Brownian bridge we have (almost surely) that $B_N(0)=B_N(1)=0$. Actually, we will be interested in the increments of this periodized Brownian bridge, so for $x \in [0,1]$ and $s \in [0,1]$, we define 
\begin{equation} \label{G_def}
G_N(x) = B_N(\{x + s\}) - B_N(x),
\end{equation}
where as before $\{ \cdot \}$ denotes the fractional part function. We note that $G_N$ of course depends on the value of the parameter $s$, but during this and the following section $s$ will be assumed to be fixed and this dependence will be suppressed. Clearly $G_N(x)$ is a Gaussian process, it admits a periodic extension to $\mathbb{R}$, and has mean zero.\\

Using the KMT approximation to express the number variance, we have
\begin{eqnarray}
&& V_N(s) \nonumber\\
& = & \int_0^1 \left( S_N(x,s) - Ns \right)^2 dx \nonumber\\
& = & \int_0^1 \left( \sum_{n=1}^N \mathbf{1}_{[0,\{x+s\}]}(X_n) - N (\{x+s\})- \sum_{n=1}^N \mathbf{1}_{[0,x]}(X_n) + Nx \right)^2 dx \nonumber\\
& = & \int_0^1 \left( \sqrt{N} B_N(\{x+s\}) +D_N(\{x+s\}) - \sqrt{N} B_N(x) - D_N(x)\right)^2 dx \nonumber\\
& = & \int_0^1 \left( \sqrt{N} G_N(x) +D_N(\{x+s\}) - D_N(x)\right)^2 dx \nonumber\\
& = & \int_0^1 \Big( N G_N(x)^2 + 2 \sqrt{N} G_N(x)  \left(D_N(\{x+s\}) - 
D_N(x) \right) + \left(D_N(\{x+s\}) - D_N(x) \right)^2 \Big) dx.\nonumber
\end{eqnarray}
Thus, according to what was said above, 
\begin{eqnarray}
&& \left| V_N(s) - N \int_0^1 G_N(x)^2 dx \right| \label{KMT_main}\\
& \leq & \int_0^1 \left| 2 \sqrt{N} G_N(x) \left(D_N(\{x+s\}) - D_N(x) \right) + \left(D_N(\{x+s\}) - D_N(x) \right)^2 \right| dx \nonumber\\
& \leq & 4  \|D_N\|_\infty \sqrt{N} \int_0^1 \left| G_N(x) \right| dx + 4 \|D_N\|_\infty^2 \label{KMT_error} \\
&\leq & 4  c \log N \sqrt{N} \int_0^1 \left| G_N(x) \right| dx + 4 c^2 (\log N)^2 \label{KMT_error_2}
\end{eqnarray}
with probability at least $1 - N^{-100}$, for sufficiently large $N$. Roughly speaking, what we have shown is that the value of $\frac{V_N(s)}{N}$ is essentially the same as that of 
\begin{equation} \label{brown_int}
H_N = \int_0^1 G_N(x)^2 dx = \int_0^1 \left( B_N(\{x + s\}) - B_N(x) \right)^2 dx,
\end{equation}
up to errors which are small with high probability. Among the error terms in lines \eqref{KMT_error} resp.\ \eqref{KMT_error_2}, the estimate for the size and exceptional probability of $\|D_N\|_\infty^2$ came from \eqref{g_size} and, consequently, directly from the KMT approximation in Lemma \ref{lem_kmt}. To estimate the size and exceptional probabilities for
$$
4 c \log N \sqrt{N} \int_0^1 \left| G_N(x) \right| dx,
$$
we will apply Cauchy--Schwarz to relate it to the distribution of the expression in line \eqref{brown_int}. So the main task now is to calculate the distribution of \eqref{brown_int}, since according to \eqref{KMT_main} this firstly constitutes the main term in the distribution of $V_N(s)$, and secondly also allows us to estimate the contribution of the remaining stochastic error term in \eqref{KMT_error_2}. This will be done in the next section, where we apply the Karhunen-Lo\`eve method to calculate the distribution of the mean-square increments of the Brownian Bridge.

\section{The Karhunen-Lo\`eve decomposition of the mean-square increments of the Brownian Bridge}

In the previous section we showed that for given $N$, the value of $\frac{V_N(s)}{N}$ can be approximated (up to errors which are very small with high probability) by $H_N$ from \eqref{brown_int}, which is in turn governed by the underlying Brownian Bridge process $B_N$ coming from the KMT approximation. Now it is crucial to note that while the path-wise approximation of $\frac{V_N(s)}{N}$ is tied to a specific Brownian Bridge $B_N$ via the equations surrounding \eqref{KMT_main}, and for different values of $N$ the number variance is connected with different Brownian Bridge processes $B_N$, for the \emph{distribution} of $\frac{V_N(s)}{N}$ it is not necessary to distinguish between different Brownian Bridges. To see this, note for example that the distribution of the expression in line \eqref{brown_int} is of course the same for all Brownian Bridges $B_N$. Accordingly, to understand the distribution of $\frac{V_N(s)}{N}$, it makes sense to define
\begin{equation} \label{G_def_without_N}
G(x) = B(\{x + s\}) - B(x) \qquad \text{and} \qquad H = \int_0^1 G(x)^2 dx,
\end{equation}
where $B$ is a (periodized) Brownian Bridge, and to study the distribution of $G$ and $H$ (where $G$ and $H$ of course depend on $s$, but this dependence is suppressed to simplify notations). This is the purpose of the present section.

\begin{lem}[Karhunen-Lo\`eve decomposition of $G$]  \label{lemma_decomp}
Let $G(x)$ be defined as in \eqref{G_def_without_N}. The distribution of $G(x)$ is the same as that of 
$$
\sum_{k=1}^\infty c_k\left( Y_k\sqrt{2}\sin(2k\pi x)+Y^\prime _k\sqrt{2}\cos(2k\pi x)\right),
$$
 where $Y_1,Y^\prime_1, Y_2, Y^\prime_2, \dots$ are independent standard Gaussian variables and $c_k=\frac{\sin(\pi ks)}{\pi k }$.
 %CA: where $Y_1, Y_2, \dots$ are i.i.d.\ random variables having exponential distribution with parameter $1$.  [Or should we include the coefficients as parameters of the exponential distributions?]
\end{lem}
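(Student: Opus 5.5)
Both $G$ and the proposed series are centred Gaussian processes on the torus, so it suffices to show that their covariance functions agree. I will expand the periodized Brownian bridge in its Fourier basis, read off the Fourier expansion of the increment $G(x)=B(\{x+s\})-B(x)$, and then rotate the coefficients of each frequency $k$ to recover the stated form with $c_k=\sin(\pi k s)/(\pi k)$.

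\textbf{Step 1 (Fourier expansion of $B$).} The Brownian bridge admits the representation
$$
B(x)=\sum_{k=1}^\infty \frac{1}{2\pi k}\Big( \xi_k\sqrt{2}\,(1-\cos(2\pi k x)) + \xi'_k\sqrt{2}\sin(2\pi k x)\Big),
$$
with $\xi_k,\xi_k'$ i.i.d.\ standard Gaussian. This converges in $L^2$ and almost surely. I would justify it by writing $W(x)=\xi_0 x+\sum_k(\dots)$ for Brownian motion, which follows from expanding $\mathbf 1_{[0,x]}$ in the orthonormal basis $1,\sqrt2\cos(2\pi k\cdot),\sqrt2\sin(2\pi k\cdot)$ of $L^2[0,1]$ and applying Parseval. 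Then $B(x)=W(x)-xW(1)$ removes exactly the $\xi_0x$ term. The expansion satisfies $B(0)=B(1)=0$ and is $1$-periodic, so it is also the periodized bridge.

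\textbf{Step 2 (the increment).} The constant terms cancel in $G(x)=B(x+s)-B(x)$. Since each summand is periodic, the fractional part in $\{x+s\}$ can be dropped. The trigonometric identities
$$\cos(a+b)-\cos a=-2\sin(b/2)\sin(a+b/2),\qquad \sin(a+b)-\sin a=2\sin(b/2)\cos(a+b/2),$$
applied with $a=2\pi k x$ and $b=2\pi k s$, give
$$
G(x)=\sum_{k\ge1}\frac{\sin(\pi k s)}{\pi k}\Big(\xi_k\sqrt2\sin(2\pi k(x+s/2))+\xi'_k\sqrt2\cos(2\pi k(x+s/2))\Big).
$$

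\textbf{Step 3 (removing the phase).} Expanding $\sin$ and $\cos$ of $2\pi kx+\pi ks$ turns the $k$-th bracket into $Y_k\sqrt2\sin(2\pi kx)+Y'_k\sqrt2\cos(2\pi kx)$. Here $(Y_k,Y'_k)$ is the image of $(\xi_k,\xi'_k)$ under a rotation by the angle $\pi k s$. Rotations preserve the standard Gaussian law on $\mathbb R^2$, and distinct $k$ use disjoint pairs, so $Y_1,Y_1',Y_2,Y_2',\dots$ are again i.i.d.\ standard Gaussian. This proves the lemma.

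\textbf{Covariance check.} As a consistency check, or as an alternative route, one can compare covariances directly. The series has covariance $\sum_k 2c_k^2\cos(2\pi k(x-y))$. On the other hand, $\operatorname{Cov}(G(x),G(y))$ is a stationary function $r(x-y)$, computable from $\min(u,v)-uv$. Its Fourier coefficients are $\int_0^1 r(t)\cos(2\pi kt)\,dt = c_k^2$, which matches. Equality of covariances then gives equality of the finite-dimensional distributions.

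\textbf{Main obstacle.} I expect the main work to lie in Step 1: justifying the Fourier representation of the bridge and its convergence rigorously. Everything after that is trigonometry. Alternatively, the stationary-covariance computation avoids Step 1 entirely, at the cost of an explicit but routine integral.
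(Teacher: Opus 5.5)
Your argument is correct, and your main route (Steps 1--3) differs from the paper's.

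\textbf{The paper's route.} The paper never expands $B$. It computes the kernel $K_G(x,y)=s(1-s)-\min(s,\|x-y\|)$ and notes that it depends only on $\|x-y\|$. It then applies a Karhunen--Lo\`eve theorem for periodic stationary processes (cited from Aletti et al.). The whole proof thus reduces to the integral $c_k^2=\int_0^1\tilde C(t)\cos(2\pi kt)\,dt=\sin^2(\pi ks)/(\pi^2k^2)$ together with $c_0=0$. This is exactly your ``covariance check''. Used as a standalone proof, that route needs two further facts. The constant Fourier coefficient $\int_0^1 r(t)\,dt$ must vanish, which holds because $\int_0^1G(x)\,dx=0$ identically by periodicity. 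The Fourier series of $r$ must converge to $r$ pointwise, which follows from continuity and $\sum_k c_k^2<\infty$.

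\textbf{Your route.} You difference the Fourier series of the bridge termwise, so $c_k$ appears transparently as the multiplier $|e^{2\pi iks}-1|/(2\pi k)$ of the shift-difference operator. The phase $\pi ks$ is then absorbed by rotation invariance of the planar standard Gaussian. This has three advantages:
\begin{itemize}
\item it is self-contained, with no external Karhunen--Lo\`eve theorem and no piecewise kernel integral;
\item it yields an almost sure identity for the bridge $W(x)-xW(1)$, not only equality in law;
\item it works verbatim for every $s\in[0,1]$, whereas the paper's kernel formula and its splitting of the integral at $s$ and $1-s$ are written for $s\le 1/2$ (larger $s$ being handled via the $s\leftrightarrow 1-s$ symmetry).
\end{itemize}
The paper's route, in turn, produces $K_G$, which it needs anyway in Lemma~\ref{lem_ex_var}, where trace and Hilbert--Schmidt norm give $\mathbb E[H]$ and $\operatorname{Var}[H]$.

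\textbf{On Step 1.} It is less of an obstacle than you fear. Equality of finite-dimensional distributions only requires convergence in $L^2(\Omega)$ for each fixed $x$. That is immediate from Parseval for $\mathbf 1_{[0,x]}$, or equivalently the It\^o isometry. No uniform almost sure convergence is needed.
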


For the proof of Lemma \ref{lemma_decomp} we use the Karhunen-Lo\`eve theorem in the following form (which is cited from \cite[Theorem 2]{Aletti2015}).
\begin{lem}[Karhunen-Loeve theorem for Gaussian process] \label{lemma_kl}

Let $S(t),~0 \leq t \leq 1 $ be a periodic zero-mean stochastic process with covariance $C(t,t^\prime)=\tilde{C}(t-t^\prime)$, where $\tilde{C}$ is a continuous real function; then in mean-square, uniformly in t,
$$S(t)=c_0Y^\prime_0+\sum_{k=1}^\infty c_k(Y_k \sqrt{2} \sin (2\pi kt)+Y^\prime_k\sqrt{2} \cos (2\pi kt)),$$
 where $(Y_k)_{k\geq 1}$ and $(Y_k^\prime)_{k\geq 0}$ are two independent sequences of independent standard Gaussian random variables, and where $(c_k)_{k\geq 0}\in \ell^2$ is such that
$$c_k^2=\int_0^1\tilde{C}(\theta)\cos(2 k \pi \theta) \,d\theta,\quad k=0,1,2,\dots $$
\end{lem}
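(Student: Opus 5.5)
The plan is to diagonalize the covariance operator explicitly using Fourier analysis, which is possible because stationarity together with periodicity means the trigonometric system is the eigenbasis. I read the statement, as the lemma's title indicates, for a Gaussian process. Gaussianity is only needed at the end to turn uncorrelatedness into independence.

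\textbf{Step 1: measurability and evenness.} Since $\tilde C$ is continuous, $\mathbb{E}|S(t)-S(t')|^2 = 2\tilde C(0)-2\tilde C(t-t')\to 0$ as $t'\to t$. So $S$ is mean-square continuous, and the random Fourier coefficients
$$A_k=\sqrt2\int_0^1 S(t)\sin(2\pi kt)\,dt,\qquad A'_k=\sqrt2\int_0^1 S(t)\cos(2\pi kt)\,dt \quad (k\ge1),\qquad A'_0=\int_0^1 S(t)\,dt$$
exist as $L^2$ (Riemann) integrals. As $L^2$-limits of Gaussian linear combinations, they are jointly Gaussian with the whole process. Symmetry $C(t,t')=C(t',t)$ gives $\tilde C(\theta)=\tilde C(-\theta)$, so $\tilde C$ is even and $1$-periodic. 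Put $a_k=\int_0^1\tilde C(\theta)\cos(2\pi k\theta)\,d\theta$.

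\textbf{Step 2: covariances of the coefficients.} By Fubini, $\mathbb{E}[A_kA_l]=2\iint \tilde C(t-t')\sin(2\pi kt)\sin(2\pi lt')\,dt\,dt'$. Substitute $\theta=t-t'$ and use periodicity to integrate over $t'$ freely. Orthogonality of the trigonometric system then gives
$$\mathbb{E}[A_kA_l]=\mathbb{E}[A'_kA'_l]=a_k\delta_{kl},\qquad \mathbb{E}[A_kA'_l]=0,\qquad \mathbb{E}[(A'_0)^2]=a_0.$$
Evenness of $\tilde C$ kills the sine-type cross terms. In particular $a_k\ge0$, being a variance; this is the Herglotz/Bochner positivity we need to define $c_k=\sqrt{a_k}$. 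Set $Y_k=A_k/c_k$ and $Y'_k=A'_k/c_k$ when $c_k>0$. When $c_k=0$, take $Y_k,Y'_k$ to be fresh independent standard Gaussians on an enlarged probability space. Joint Gaussianity plus zero correlations gives independence of all the $Y$'s.

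\textbf{Step 3: mean-square convergence, uniformly in $t$; this is the main obstacle.} Let $S_n(t)$ denote the partial sum up to $k=n$. Using Step 2 and $\mathbb{E}[S(t)A_k]=\sqrt2\,a_k\sin(2\pi kt)$ (same substitution), we get
$$\mathbb{E}|S(t)-S_n(t)|^2=\tilde C(0)-a_0-2\sum_{k=1}^n a_k ,$$
which is \emph{independent of $t$}. Uniformity is therefore automatic, and it remains to show $a_0+2\sum_k a_k=\tilde C(0)$, i.e.\ that the Fourier series of $\tilde C$ converges at $0$. For a merely continuous function this can fail in general. Here, however, the coefficients are nonnegative. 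The Fejér means of $\tilde C$ converge to $\tilde C(0)$ at $\theta=0$ by continuity, and they are dominated below by the weighted sums $\sum_{|k|\le n}(1-|k|/(n+1))a_{|k|}$. Fixing $M$ and letting $n\to\infty$ gives $\sum_{|k|\le M}a_{|k|}\le\tilde C(0)$, so the series converges absolutely. Since its Cesàro limit is $\tilde C(0)$, its ordinary sum equals $\tilde C(0)$.

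This shows the right-hand side above tends to $0$, which proves the expansion. As a byproduct we get $(c_k)\in\ell^2$, since $\sum a_k<\infty$.
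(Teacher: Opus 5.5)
Your proof is correct, and it takes a different route from the paper: the paper gives no argument for this lemma at all. It quotes the lemma as Theorem~2 of an external reference and uses it as a black box. The standard route to such a statement is the general Karhunen--Lo\`eve theorem via Mercer's theorem, in which the covariance operator of a translation-invariant kernel on the torus has the trigonometric functions as eigenfunctions.

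Your argument is self-contained and more elementary. You compute the covariances of the random Fourier coefficients directly. You then observe that stationarity makes the mean-square error $\tilde C(0)-a_0-2\sum_{k\le n}a_k$ independent of $t$, so uniformity comes for free. In place of Mercer's theorem you use the Fej\'er-mean argument: a continuous function with nonnegative Fourier coefficients has an absolutely convergent Fourier series at $0$, with sum equal to the function's value there.

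This buys several things beyond the citation:
\begin{itemize}
\item It makes explicit that Gaussianity, which appears only in the lemma's title and not in its hypotheses, is what upgrades uncorrelatedness to independence.
\item It handles the degenerate case $c_k=0$ by adjoining fresh Gaussians. This case does occur in the paper's application, since $c_k=\sin(\pi ks)/(\pi k)$ vanishes whenever $ks\in\mathbb Z$.
\item Your identity $a_0+2\sum_{k\ge1}a_k=\tilde C(0)$ is exactly the trace formula (sum of eigenvalues equals $\int_0^1 K_G(x,x)\,dx$). The paper invokes that formula without proof in Lemma~\ref{lem_ex_var}, so your Step~3 also justifies that step.
\end{itemize}
For the specific kernel of Lemma~\ref{lemma_decomp} the Fej\'er step could be skipped, since there $a_k\le 1/(\pi^2k^2)$ is trivially summable. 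Your argument covers arbitrary continuous $\tilde C$.

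One cosmetic point: the Fej\'er mean at $0$ \emph{equals} $\sum_{|k|\le n}(1-|k|/(n+1))a_{|k|}$, rather than being bounded below by it. The lower bound you actually need comes from truncating this sum to $|k|\le M$ using $a_k\ge 0$, which is what your next sentence does.
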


\begin{proof}[Proof of Lemma \ref{lemma_decomp}]
We start by calculating 
$$
K_G(x,y)=\mathbb{E} \left[G(x)G(y)\right],
$$ 
the covariance kernel for the process $G$. A direct calculation, using the definition of $G$ and the fact that the covariance kernel of the Brownian Bridge is given by $K_B(s,t) = \min(s,t) - st$, yields
\begin{eqnarray*}
K_G(x,y) = s(1-s)-\min(s,\|x-y\|) =: \tilde{C}(x-y);
\end{eqnarray*} 
where $\|\cdot\|$ denotes the distance on the torus. Note in particular that this shows that 
$$
K_G(x,y) = K_G(\|x-y\|,0),
$$
i.e., the kernel $K_G$ is translation-invariant and its value only depends on the distance (on the torus) between $x$ and $y$. Note furthermore that the kernel has a symmetry in the sense that it gives the same value for $s$ and for $1-s$.\\

The next step is to solve the eigenvalue problem for this covariance kernel. Applying Lemma \ref{lemma_kl} to $G(x)$ we obtain $c_0^2=\int_0^1\tilde{C}(t) \,dt=0$ since $\mathbb{E} \left( \int_0^1 G(x) \, dx \right) = 0$, and for $k\geq 1$ we get
\begin{eqnarray*}
c_k^2& = & \int\limits_0^1 \tilde{C}(t)\cos(2\pi kt) \,dt \\
& = & 2\int\limits_0^s (s(1-s)-t)\cos(2\pi k t) \, dt-\int\limits_s^{1-s}s^2\cos(2\pi kt) \,dt\\
& = & \frac{1}{2\pi^2k^2}(1-2\pi s^2k\sin(2\pi ks)-\cos(2\pi k s)) -\frac{s^2}{\pi k}\cos(\pi k)\sin(\pi k(1-2s))\\
& = &  \frac{1-\cos(2\pi k s)}{2\pi^2k^2}=\frac{\sin^2(\pi ks)}{\pi^2 k^2 }.
\end{eqnarray*} 
\end{proof}
Our next concern will be the distribution of the $L^2$-norm of the process $G(x)$, i.e., the distribution of the random variable $H$ as defined in \eqref{G_def_without_N}.

As a corollary of Lemma \ref{lemma_decomp}, we have the following statement.
\begin{cor}  \label{cor_H}
    Let $Z_1, Z_2, \dots$ be i.i.d.\ random variables having an exponential distribution with parameter 1. Then the distribution of $H$ is the same as the distribution of 
    $$
    %\sum_{k=1}^\infty \nu_k(Y_k^2+(Y^\prime_k)^2),\quad \sum_{k=1}^\infty \nu_k X_k,\quad 
    \sum_{k=1}^\infty \lambda_k Z_k,
    $$
    where 
    $$
    \lambda_k = \frac{2 \sin^2(\pi ks)}{\pi^2 k^2 }, \quad k \geq 1.
    $$
\end{cor}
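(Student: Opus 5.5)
We will derive Corollary \ref{cor_H} directly from Lemma \ref{lemma_decomp} by integrating the square of the Karhunen--Lo\`eve expansion and using the orthonormality of the trigonometric system. The functions $\sqrt{2}\sin(2k\pi x)$, $\sqrt{2}\cos(2k\pi x)$, $k \geq 1$, form an orthonormal system in $L^2[0,1]$. Lemma \ref{lemma_kl} gives convergence in mean square, uniformly in $x$. Since $\sum_k c_k^2 < \infty$, the series
$$
\tilde G(x)=\sum_{k=1}^\infty c_k\left( Y_k\sqrt{2}\sin(2k\pi x)+Y^\prime _k\sqrt{2}\cos(2k\pi x)\right)
$$
converges almost surely in $L^2[0,1]$ as a random element. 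This follows because $\mathbb{E}\sum_k c_k^2(Y_k^2+Y_k'^2)<\infty$, so the coefficient sequence lies in $\ell^2$ almost surely. Parseval's identity then gives
$$
\int_0^1 \tilde G(x)^2\,dx=\sum_{k=1}^\infty c_k^2\left(Y_k^2+(Y_k')^2\right)\quad\text{almost surely.}
$$

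Next we identify the distribution of each summand. For independent standard Gaussians $Y_k, Y_k'$, the variable $Y_k^2+(Y_k')^2$ is $\chi^2_2$-distributed, which is exponential with mean $2$. Hence it equals $2Z_k$ in distribution with $Z_k\sim\mathrm{Exp}(1)$, and independence across $k$ carries over. This yields $\sum_k 2c_k^2 Z_k=\sum_k\lambda_k Z_k$, with $\lambda_k=2\sin^2(\pi ks)/(\pi^2k^2)$ as claimed.

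The main obstacle is justifying that $H=\int_0^1 G^2$ has the same law as $\int_0^1\tilde G^2$. Lemma \ref{lemma_decomp} only asserts equality of distributions of the processes, so we must pass from finite-dimensional distributions to an $L^2$-functional. The most direct route avoids this issue. We expand $G$ itself in the trigonometric basis and set $a_k=\int_0^1 G(x)\sqrt{2}\sin(2k\pi x)\,dx$ and $b_k=\int_0^1 G(x)\sqrt{2}\cos(2k\pi x)\,dx$; the constant coefficient vanishes because $\int_0^1 G = B(\cdot)$ integrated against a shift of itself minus itself $=0$ pathwise. These coefficients are jointly Gaussian, being $L^2$-limits of Riemann sums of a continuous Gaussian process. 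Fubini together with the stationarity of the covariance $\tilde C(x-y)$ shows that they are independent and that each has variance $c_k^2$, via the computation already done in the proof of Lemma \ref{lemma_decomp}. Pathwise Parseval applied to the continuous function $G$ then gives $H=\sum_k(a_k^2+b_k^2)$ exactly, and the corollary follows by the previous paragraph.
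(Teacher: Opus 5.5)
Your proposal is correct and follows the route the paper leaves implicit when it calls this a corollary of Lemma \ref{lemma_decomp}: integrate the square of the expansion, use orthonormality (Parseval) to get $\sum_k c_k^2\bigl(Y_k^2+(Y_k')^2\bigr)$, and note that $Y_k^2+(Y_k')^2$ has the law of $2Z_k$, so $\lambda_k=2c_k^2$. Your third paragraph is a rigorous justification, which the paper does not spell out, of the passage from the process-level identity to $H$ (essentially the proof of Lemma \ref{lemma_kl} specialized to $G$): you compute the Fourier coefficients of $G$ itself, show via the stationary covariance that they are independent $N(0,c_k^2)$, and apply pathwise Parseval.
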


\begin{lem}  \label{lem_ex_var}
    We have $\mathbb{E}[H]=s(1-s)$ and $\operatorname{Var}[H]=\frac{2}{3}(2-3s)s^3$.
\end{lem}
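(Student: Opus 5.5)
By Corollary~\ref{cor_H}, $H$ has the same law as $\sum_{k\ge1}\lambda_k Z_k$ with $Z_k$ i.i.d.\ exponential of mean and variance $1$. The series converges in $L^2$ since $\lambda_k=O(k^{-2})$. Hence
$$
\mathbb{E}[H]=\sum_{k\ge1}\lambda_k, \qquad \operatorname{Var}[H]=\sum_{k\ge1}\lambda_k^2 ,
$$
and the task reduces to evaluating two trigonometric series in closed form for $s\in[0,1]$.

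\textbf{Mean.} There is a shortcut that avoids Fourier series. By Fubini and the covariance formula in the proof of Lemma~\ref{lemma_decomp}, $\mathbb{E}[H]=\int_0^1 K_G(x,x)\,dx=\tilde C(0)$. Using $K_B(s,t)=\min(s,t)-st$ gives $\operatorname{Var}(B(x+s)-B(x))=s(1-s)$. As a cross-check, write $\lambda_k=\frac{1-\cos(2\pi ks)}{\pi^2k^2}$ and use the Bernoulli polynomial identity $\sum_{k\ge1}\frac{\cos 2\pi kx}{k^2}=\pi^2(x^2-x+\tfrac16)$ for $0\le x\le1$. This gives $\sum_k\lambda_k=\tfrac16-(s^2-s+\tfrac16)=s(1-s)$.

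\textbf{Variance.} Here we need $\sum_k\lambda_k^2=\frac{4}{\pi^4}\sum_k \frac{\sin^4(\pi ks)}{k^4}$. There are two routes.
\begin{enumerate}
\item Direct computation. Expand
$$
\sin^4\theta=\tfrac38-\tfrac12\cos2\theta+\tfrac18\cos4\theta ,
$$
and use the degree-four Bernoulli identity $\sum_{k\ge1}\frac{\cos 2\pi kx}{k^4}=-\frac{\pi^4}{3}B_4(\{x\})$ with $B_4(x)=x^4-2x^3+x^2-\tfrac1{30}$. The term $\cos(4\pi ks)$ corresponds to $x=\{2s\}$, so one must reduce $2s$ modulo $1$.
\item Parseval (the route I prefer). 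Since $\sqrt{\lambda_k/2}=|c_k|$ are the Fourier coefficients of $\tilde C$, we have $\lambda_k^2=4c_k^4$ and $\sum_k\lambda_k^2=2\int_0^1\tilde C(t)^2\,dt$. This follows because $\int_0^1\tilde C^2=\sum_{k\in\mathbb Z}|\hat{\tilde C}(k)|^2=2\sum_{k\ge1}(c_k^2)^2$, the zero mode vanishing. It is also exactly $\operatorname{Var}\int G^2=2\iint K_G^2$.
\end{enumerate}

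To carry out the Parseval route, take $s\le\tfrac12$ and split the torus into two regions:
\begin{itemize}
\item on $|t|\le s$ (total length $2s$), $\tilde C(t)=s(1-s)-|t|$;
\item on the remaining length $1-2s$, $\tilde C(t)=-s^2$.
\end{itemize}
Then
$$
\int_0^1\tilde C^2=2\int_0^s(s-s^2-t)^2\,dt+(1-2s)s^4 .
$$
Substituting $u=s-s^2-t$, the first integral is $\frac{(s-s^2)^3-(-s^2)^3}{3}$. Doubling and simplifying should give $\tfrac23(2-3s)s^3$ after multiplying by $2$. The case $s>\tfrac12$ follows either from the symmetry $s\leftrightarrow 1-s$ of $\lambda_k$, or by checking that the same polynomial identity continues. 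Note, however, that the claimed formula is not symmetric under $s\leftrightarrow1-s$, so for $s>\tfrac12$ one should expect the formula to hold only in the regime $s\le\tfrac12$ relevant to the paper, and I would state that restriction.

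The main obstacle is the algebra in the variance: keeping track of the piecewise structure of $\tilde C$ (or of $\{2s\}$ in the direct-computation route) and confirming that the polynomial collapses to $\tfrac23(2-3s)s^3$. I would verify the result at $s=\tfrac12$ and via the small-$s$ expansion, where the leading term $\tfrac43 s^3$ should match the order $N^2s^3$ mentioned in the introduction.
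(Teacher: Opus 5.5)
Your proposal is correct and is essentially the paper's argument: both identify $\sum_k\lambda_k$ with $\int_0^1 K_G(x,x)\,dx$ and $\sum_k\lambda_k^2$ with $2\int_0^1\int_0^1 K_G(x,y)^2\,dx\,dy$, and your unfinished algebra does close, since $\int_0^1\tilde C^2=\frac23 s^3-s^4$. The paper gets these identities from the trace and Hilbert--Schmidt identities for the eigenvalue pairs $\lambda_k/2$, while you use Fubini and Parseval (one slip there: the Fourier coefficients of $\tilde C$ are $c_k^2=\lambda_k/2$, not $|c_k|$, though your displayed identity already uses the correct $c_k^2$).

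Your caveat for $s>\frac12$ is right and in fact corrects the paper. Its appeal to the $s\leftrightarrow 1-s$ symmetry of $K_G$ only yields $\frac23\bigl(2-3(1-s)\bigr)(1-s)^3$ on $[\frac12,1]$, whereas the stated formula would give the impossible value $-\frac23$ at $s=1$, where $G\equiv 0$.
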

\begin{proof}
    Using the fact that $H=\sum_{k=1}^\infty \lambda_k Z_k$ in distribution, and that $\mathbb{E}[Z_k]=1$ as well as $\operatorname{Var}[Z_k]=1$ for all $k$, we get
    \begin{align}
        \mathbb{E}[H]=\sum_{k=1}^\infty \lambda_k \quad \text{and}\quad \operatorname{Var}[H]=\sum_{k=1}^\infty \lambda_k^2. \label{sum_lambda_1}
    \end{align}
Recall that, according to the proof of Lemma \ref{lemma_decomp}, the eigenvalues of the kernel $K_G(x,y)$ are given by the pairs $\{\frac{\lambda_k}{2},\frac{\lambda_k}{2}\}_{k\geq 1}$. By the standard theory of integral operators, the sum of the eigenvalues equals the trace of the kernel, and the sum of the squares of the eigenvalues equals the $L^2$ norm of the kernel. Thus, 
\begin{equation*} \label{sum_lambda_2}
\mathbb{E}[H]=\int_0^1 K_G(x,x) ~dx=s(1-s)
\end{equation*}
and
\begin{equation}
\operatorname{Var}[H]=2\int_0^1\int_0^1 K_G(x,y)^2 ~dx ~dy=\frac{2}{3}s^3(2-3s),\quad \text{for}\quad 0\leq s\le 1/2.   \label{sum_lambda_3}
\end{equation}
The result for $s \in [1/2,1]$ follows from the symmetry of $K_G$ as a function of $s$, as noted in the proof of Lemma \ref{lemma_decomp}.
\end{proof}

\section{Estimates for the distribution of the mean-square increments of the Brownian bridge}

In the previous section, we gave an exact formula for the distribution of $H$ (the mean-square increments of the Brownian bridge) in terms of the distribution of a sum of independent random variables having exponential distribution. However, this representation does not allow us to directly estimate the actual values of the distribution function of $H$, and in particular it does not allow us to directly estimate the tail probabilities for large deviations of $H$ (which will be the key ingredient for the proof of Theorem \ref{th_2}). In the rest of this section, we analyze the distribution of $H$, to prepare for the proof of Theorem \ref{th_2} in Sections \ref{sec_th2} and \ref{sec_th3}.\\

To obtain upper and lower estimates for the tail probabilities of $H$ (and accordingly also of $V_N(s)$), we start with a computation of the moment-generating function of $H$.

\begin{lem}[MGF for H] \label{lemma_lower_H}
For $s \in (0,1/2)$ and $0<|t| \leq \frac{1}{3s^2}$ the moment generating function of $H-s(1-s)$ satisfies
$$
\mathbb{E} \left[e^{t (H-s(1-s))}\right] = e^{\frac{2 t^2 s^3}{3}-t^2s^4+\mathcal{O}(|t|^3s^5)},
$$
with an absolute implied constant (for which 16 is an admissible value). 
\end{lem}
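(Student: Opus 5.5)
We use Corollary \ref{cor_H}: $H$ has the same distribution as $\sum_k \lambda_k Z_k$ with independent exponentials $Z_k$. Since $\mathbb{E}[e^{tZ}] = (1-t)^{-1}$ for $t<1$, independence gives, whenever $t\lambda_k<1$ for all $k$,
$$
\log \mathbb{E}\left[e^{t(H-s(1-s))}\right] = \sum_{k=1}^\infty \bigl(-\log(1-t\lambda_k) - t\lambda_k\bigr),
$$
where we used $\sum_k\lambda_k = s(1-s)$ from Lemma \ref{lem_ex_var}.

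\textbf{Step 1: the MGF is finite.} We check that $|t|\lambda_k$ is uniformly small. Using $|\sin x|\le |x|$ gives $\lambda_k \le 2 s^2$, so $|t|\lambda_k \le \frac{2}{3}$. This is finite, but not small enough for a clean series expansion. Small $k$ therefore needs slightly more care. With $|\sin(\pi k s)|\le \min(1,\pi k s)$ we still only get $\lambda_k\le 2s^2$, so the constant bound $|t\lambda_k|\le 2/3$ is what we must work with.

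\textbf{Step 2: expand the logarithm.} Write
$$
-\log(1-u)-u = \frac{u^2}{2} + R(u), \qquad |R(u)| \le C|u|^3 \quad \text{for } |u|\le 2/3.
$$
One can take $C = \frac{1}{3(1-2/3)} = 1$, by summing the tail $\sum_{j\ge3} |u|^j/j$. Applying this with $u=t\lambda_k$,
$$
\log \mathbb{E}\left[e^{t(H-s(1-s))}\right] = \frac{t^2}{2}\sum_k \lambda_k^2 + \mathcal{O}\Bigl(|t|^3\sum_k\lambda_k^3\Bigr).
$$
By Lemma \ref{lem_ex_var}, $\sum_k\lambda_k^2 = \operatorname{Var}[H] = \frac{2}{3}(2-3s)s^3$. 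Hence $\frac{t^2}{2}\sum_k\lambda_k^2 = \frac{2t^2s^3}{3} - t^2 s^4$, which is exactly the main term in the statement.

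\textbf{Step 3: bound $\sum_k\lambda_k^3$.} This is the only genuinely new estimate. Using $\lambda_k\le \min(2s^2,\, 2/(\pi^2k^2))$, split the sum at $k\approx 1/s$:
$$
\sum_{k\le 1/s} (2s^2)^3 \le 8s^5, \qquad \sum_{k>1/s}\Bigl(\frac{2}{\pi^2k^2}\Bigr)^3 \le \frac{8}{\pi^6}\cdot\frac{s^5}{5}\,(1+\mathcal{O}(s)).
$$
So $\sum_k \lambda_k^3 \le c\, s^5$, with $c$ a bit above $8$. Combining with Step 2 gives the error term $\mathcal{O}(|t|^3s^5)$ with constant at most about $8C+\text{small}$, so $16$ is an admissible value.

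For negative $t$ the same argument applies: $1-t\lambda_k\ge1$, so $-\log(1-u)-u$ with $u\in[-2/3,0]$ again satisfies the cubic remainder bound.

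The main (mild) obstacle is the bookkeeping of constants in Steps 2 and 3, so that the absolute constant indeed comes out at or below $16$. In particular one must control $R(u)$ on the full range $|u|\le 2/3$ rather than asymptotically, and must not lose factors in the splitting of $\sum_k\lambda_k^3$. The identification of the quadratic term is immediate from Lemma \ref{lem_ex_var}.
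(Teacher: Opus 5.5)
Your proposal is correct and follows the paper's proof. Both arguments use the exponential MGF, a Taylor expansion of $-\log(1-t\lambda_k)$, the quadratic term read off from $\sum_k\lambda_k^2=\frac43 s^3-2s^4$, and higher-order terms controlled through $\lambda_k\le\min\{2s^2,2/(\pi k)^2\}$ with a split at $k\approx 1/s$.

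The only difference is bookkeeping. You bound the remainder pointwise by $|t\lambda_k|^3$, which is valid since $|t\lambda_k|\le 2/3$, so you only need $\sum_k\lambda_k^3$ to be roughly $8s^5$. The paper instead bounds every power sum $\sum_k\lambda_k^n\le 2^{n+1}s^{2n-1}$ and sums a geometric series in $n$; that yields the constant $16$ rather than your roughly $8$.
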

\begin{proof}
    Recall that the moment-generating function of $Z\sim \operatorname{Exp}(1)$ is given by 
    $$\mathbb{E}[e^{uZ}]=\frac{1}{1-u}$$
    for $u<1$. Recall also that $\lambda_k \leq 2s^2$ for all $k \geq 1$, and that $\sum_{k=1}^\infty \lambda_k = s(1-s)$. Accordingly, for $0<|t|<\frac{1}{3s^2}$ we have $|t \lambda_k|<1$ for all $k\geq 1$, which implies 
    \begin{eqnarray*}
        \mathbb{E} [e^{t (H-s(1-s))}]&=&\mathbb{E} [e^{t\sum_{k=1}^\infty \lambda_k Z_k}]\cdot e^{-t\sum_{k=1}^\infty \lambda_k}\\
        & = &\left(\prod_{k=1}^\infty \frac{1}{1-t\lambda_k}\right)\cdot e^{-t\sum_{k=1}^\infty \lambda_k} \\
        & = &e^{-\sum_{k=1}^\infty \log(1-t\lambda_k)}\cdot e^{-t\sum_{k=1}^\infty \lambda_k} .
    \end{eqnarray*}
    Using Taylor expansion for $\log(1-t\lambda_k)$ we get 
    $$ \mathbb{E} [e^{t (H-s(1-s))}]=e^{\sum_{n=2}^\infty \frac{t^n}{n}\sum_{k=1}^\infty \lambda_k^n}.$$
    Recall that $\lambda_k \leq \min \{2s^2, 2 / (\pi k)^2\}$ for all $k \geq 1$. Hence, splitting the range for $k$ into $k \leq \lceil 1/(\pi s) \rceil$ and $k > \lceil 1/(\pi s) \rceil$, we have
    $$0\leq\sum_{k=1}^\infty \lambda_k^n\leq \left\lceil \frac{1}{\pi s} \right\rceil (2s^2)^n+\sum_{k > \lceil 1/(\pi s) \rceil} \left(\frac{2}{\pi^2k^2}\right)^n\leq 2^n s^{2n-1}+\frac{2^n}{\pi (2n-1)}s^{2n-1}\leq 2^{n+1} s^{2n-1}, $$
    where we used that $\lceil x \rceil \leq \frac{\pi}{2} x$ for $x \geq 2/\pi$ (recall that $s \leq 1/2$ by assumption). This gives us 
    $$\left|\sum_{n=3}^\infty\frac{t^n}{n}\sum_{k=1}^\infty \lambda_k^n \right|\leq \frac{2}{s}\sum_{n=3}^\infty \frac{(2|t|s^2)^n}{n} \leq \frac{2}{3s}\sum_{n=3}^\infty (2|t|s^2)^n= \frac{16 |t|^3 s^5}{3} \frac{1}{1-2 |t| s^2} \leq 16 |t|^3 s^5,$$
    since $|t| s^2 \leq 1/3$ by assumption. For $n=2$, we already know that 
    $$\sum_{k=1}^\infty\lambda_k^2=\frac{4}3s^3-2s^4.$$
    Overall, this yields
    $$
\mathbb{E} [e^{t (H-s(1-s))}] = e^{\frac{2 t^2 s^3}{3}-t^2s^4+\mathcal{O}(|t|^3s^5)},
$$
where the constant hidden in the $\mathcal{O}$-term is at most 16.
\end{proof}

\begin{lem}[Tail probability upper bound from MGF] \label{lem_H_tail}
For $s \in (0, 1/2]$ and $x \in [0, s/100]$, we have
\begin{equation} \label{H_upper_bound}
\mathbb{P} \left( |H - s(1-s)| > x \right) \leq 2 e^{-\frac{x^2}{4 s^3}}.
\end{equation}
\end{lem}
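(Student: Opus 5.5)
We apply the exponential Chebyshev (Chernoff) inequality to each tail separately, using the MGF bound of Lemma \ref{lemma_lower_H}. For the upper tail and any $0 < t \leq \frac{1}{3s^2}$,
$$
\mathbb{P}\left(H - s(1-s) > x\right) \leq e^{-tx}\, \mathbb{E}\left[e^{t(H-s(1-s))}\right] \leq \exp\left(-tx + \tfrac{2t^2s^3}{3} - t^2 s^4 + 16 t^3 s^5\right).
$$
For the lower tail, Lemma \ref{lemma_lower_H} with $t$ replaced by $-t$ gives the same bound for $\mathbb{P}(H - s(1-s) < -x)$, since the $t^2$ terms are even and the cubic error is controlled in absolute value. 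Summing the two tails produces the factor $2$ in \eqref{H_upper_bound}.

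Next we choose $t$. Ignoring the small terms, the exponent is minimised at $t = \frac{3x}{4s^3}$, which would give $-\frac{3x^2}{8s^3}$. It is cleaner to take $t = \frac{x}{2s^3}$. We must check that this $t$ is admissible: since $x \leq s/100$, we get $t \leq \frac{1}{200 s^2} \leq \frac{1}{3s^2}$. With this choice we drop the term $-t^2 s^4 \leq 0$, and the exponent is at most
$$
-\frac{x^2}{2s^3} + \frac{2}{3}\cdot\frac{x^2}{4s^3} + 16\cdot\frac{x^3}{8s^4} = -\frac{x^2}{3s^3} + \frac{2x^3}{s^4}.
$$
The cubic term is handled by $x \leq s/100$, which gives $\frac{2x^3}{s^4} \leq \frac{x^2}{50 s^3}$. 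The exponent is then at most $-\left(\frac13 - \frac1{50}\right)\frac{x^2}{s^3} \leq -\frac{x^2}{4s^3}$, which yields \eqref{H_upper_bound}.

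The only delicate point is the bookkeeping of the error term. One must confirm that the constant $16$ in the $\mathcal{O}(|t|^3s^5)$ of Lemma \ref{lemma_lower_H} is valid for negative $t$ as well. The proof of that lemma bounds $\left|\sum_{n\geq 3} \frac{t^n}{n}\sum_k \lambda_k^n\right|$ in absolute value, so it is. One must also confirm that the restriction $x \leq s/100$ makes the chosen $t$ admissible and makes the cubic correction negligible, both of which were checked above. The case $x = 0$ is trivial, since the right-hand side is then $2$.
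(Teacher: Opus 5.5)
Your proof is correct and follows the paper's argument essentially step for step. Both apply the Chernoff bound with $t = x/(2s^3)$ and the MGF lemma with explicit constant $16$, absorb the cubic term $2x^3/s^4 \le x^2/(50s^3)$ using $x \le s/100$, and handle the lower tail by the symmetric MGF bound. Your explicit checks that $t \le 1/(200s^2)$ is admissible and that the case $x=0$ is trivial fill in points the paper leaves implicit.
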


\begin{proof}
The Chernoff bound asserts that for any $t > 0$ we have
\begin{equation} \label{chernoff}
\mathbb{P}(H - s(1-s) > x) \leq e^{-tx} \mathbb{E}[e^{t(H-s(1-s))}].
\end{equation}
Setting $t = \frac{x}{2s^3}$, by Lemma \ref{lemma_lower_H} (and using the fact that the implied absolute constant can be chosen to be 16) we have
$$
 \mathbb{E}[e^{t(H-s(1-s))}] \leq \exp \left(\frac{2 t^2 s^3}{3} + 16 t^3 s^5 \right)  \leq \exp \left(\frac{x^2}{6s^3} + \frac{16x^2}{800 s^3} \right) \leq \exp \left(\frac{x^2}{4s^3}\right),
$$
where we used the assumption that $x \leq s/100$. Thus we have
\begin{equation} \label{one_sided}
\mathbb{P}(H - s(1-s) > x) \leq \exp \left( -\frac{x^2}{2 s^3} + \frac{x^2}{4s^3} \right) \leq \exp \left( -\frac{x^2}{4 s^3}\right).
\end{equation}
Switching from $H - s(1-s)$ to $|H - s(1-s)|$ (at the cost of a factor 2) is a standard argument relying on symmetry: changing from $H - s(1-s)$ to $-H + s(1-s)$ does not affect the proof of Lemma \ref{lemma_lower_H}, and one obtains 
$$
\mathbb{E} \left[e^{t (-H+s(1-s))}\right]= e^{\frac{2 t^2 s^3}{3}-t^2s^4+\mathcal{O}(t^3s^5)},
$$
in perfect analogy to the conclusion of Lemma \ref{lemma_lower_H}. Using the Chernoff bound now yields
$$
\mathbb{P}(-H + s(1-s) > x) \leq \exp \left( -\frac{x^2}{4 s^3}\right)
$$
as an analogue of \eqref{one_sided}, so that overall
$$
\mathbb{P}(|H - s(1-s)| > x) \leq 2 \exp \left( -\frac{x^2}{4 s^3}\right),
$$
as claimed.
\end{proof}

\begin{lem}[Tail probability lower bound from MGF] \label{lemma_lower_tail}
For any $s \leq 1/100$ and $\varepsilon \leq 1/1000$ we have
$$
\mathbb{P} \left( H - s(1-s) \geq \varepsilon s \right) \geq  \min \left\{ \frac{\varepsilon^4}{s^2}, \frac{1}{4} \right\} e^{-\frac{17 \varepsilon^2}{s}}.
$$
\end{lem}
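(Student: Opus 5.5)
We prove the lower bound by the standard exponential-tilting (Cramér change of measure) argument, using the exact MGF expansion of Lemma \ref{lemma_lower_H}. Write $W = H - s(1-s)$. Fix a tilting parameter $t>0$ with $t \leq \frac{1}{3s^2}$, and define the tilted measure $\mathbb{Q}$ by $d\mathbb{Q}/d\mathbb{P} = e^{tW}/M(t)$, where $M(t)=\mathbb{E}[e^{tW}]$. Under $\mathbb{Q}$ the representation $H=\sum_k \lambda_k Z_k$ from Corollary \ref{cor_H} is preserved with independent $Z_k$. Each $Z_k$ becomes exponential with parameter $1-t\lambda_k$. Consequently
$$
\mathbb{E}_{\mathbb{Q}}[W] = \sum_k \frac{t\lambda_k^2}{1-t\lambda_k} \approx t\left(\tfrac43 s^3 - 2s^4\right), \qquad \operatorname{Var}_{\mathbb{Q}}[W] = \sum_k \frac{\lambda_k^2}{(1-t\lambda_k)^2} \asymp s^3.
$$
Both quantities are controlled by the bounds $\sum_k \lambda_k^n \leq 2^{n+1}s^{2n-1}$ from the proof of Lemma \ref{lemma_lower_H}.

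We choose $t$ so that the tilted mean lands slightly above the target $\varepsilon s$, say $t \approx \frac{3(\varepsilon s + \delta)}{4s^3}$ with a small window width $\delta$. Note that $t s^2 \approx \frac{3\varepsilon}{4s}$. This is within $\frac{1}{3s^2}$ only if $\varepsilon \lesssim 1/(4s)\cdot s$, that is $\varepsilon\lesssim 1/4$, and that is guaranteed by $\varepsilon \leq 1/1000$. Then
$$
\mathbb{P}(W \geq \varepsilon s) \geq \mathbb{P}(\varepsilon s \leq W \leq \varepsilon s + 2\delta) \geq M(t)\, e^{-t(\varepsilon s + 2\delta)}\, \mathbb{Q}(\varepsilon s \leq W \leq \varepsilon s+2\delta).
$$
By Lemma \ref{lemma_lower_H}, $M(t) \geq \exp(\frac{2t^2s^3}{3} - t^2 s^4 - 16 t^3 s^5)$. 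Substituting $t \approx \frac{3\varepsilon}{4s^2}$, the exponent becomes
$$
-\frac{3\varepsilon^2}{4s} + \frac{3\varepsilon^2}{8s} + O\!\left(\varepsilon^2 + \frac{\varepsilon^3}{s}\right) - \frac{3\varepsilon\delta}{2s^2},
$$
which is of the order $-\frac{3\varepsilon^2}{8s}$. This is far better than the $-\frac{17\varepsilon^2}{s}$ we are allowed, so there is ample room to absorb constants. The $s^4$ and cubic corrections are harmless because $s\leq 1/100$ and $\varepsilon\leq 1/1000$.

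The main obstacle is a lower bound for the window probability $\mathbb{Q}(\varepsilon s \leq W \leq \varepsilon s + 2\delta)$. We use Paley--Zygmund or Chebyshev under $\mathbb{Q}$. Take $\delta$ of the order of the tilted standard deviation, $\delta = C s^{3/2}$, and place the tilted mean at $\varepsilon s + \delta$. Chebyshev then gives $\mathbb{Q}(|W - \mathbb{E}_{\mathbb{Q}}W| \leq \delta) \geq 1 - \operatorname{Var}_\mathbb{Q}/\delta^2 \geq \frac12$ for a suitable constant $C$. The cost of the window is the factor $e^{-3\varepsilon\delta/(2s^2)} = e^{-O(\varepsilon/\sqrt{s})}$. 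This is absorbed into $e^{-c\varepsilon^2/s}$ when $\varepsilon^2/s \gtrsim \varepsilon/\sqrt{s}$, that is when $\varepsilon \gtrsim \sqrt{s}$.

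In the complementary regime $\varepsilon \lesssim \sqrt{s}$ the deviation is within a few standard deviations, and we argue directly without tilting. When $\varepsilon\ll s^{1/2}$, the target $\varepsilon s$ lies well inside the typical fluctuation range $s^{3/2}$. Here we apply Paley--Zygmund to $W_+$, or use the third-moment bound $\mathbb{E}|W|^3 \lesssim s^{9/2}$ together with $\mathbb{E}W=0$ and $\mathbb{E}W^2 \asymp s^3$. This gives $\mathbb{P}(W \geq \varepsilon s)\geq$ an absolute constant, which accounts for the $\frac14$ branch. The factor $\min\{\varepsilon^4/s^2, 1/4\}$ suggests that the authors instead used a second-moment (Paley--Zygmund) bound on the tilted variable in a transitional range, losing a polynomial factor of the form $(\varepsilon^2/s)^2$. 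If the clean Chebyshev window fails in the intermediate regime, we will fall back to that route. There we apply Paley--Zygmund to $(W-\varepsilon s)_+$ under $\mathbb{Q}$, which yields a probability $\gtrsim (\mathbb{E}_\mathbb{Q}(W-\varepsilon s))^2/\mathbb{E}_\mathbb{Q}W^2 \gtrsim \varepsilon^4/s^2$-type factors.

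Tracking the numerical constants, with $16$ from Lemma \ref{lemma_lower_H} and the bounds on $s$ and $\varepsilon$, then completes the proof with exponent $17\varepsilon^2/s$.
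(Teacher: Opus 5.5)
You take a genuinely different route from the paper: Cram\'er tilting to $\mathbb{Q}$ followed by a Chebyshev window. Done carefully, it is even sharper, giving the rate $e^{-3\varepsilon^2/(8s)}$ rather than $e^{-17\varepsilon^2/s}$ in the large-deviation regime. As written, however, it does not prove the lemma with its stated constants. The gap lies in the transitional range $\varepsilon\asymp\sqrt{s}$ that you leave open.

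\emph{The branches of the minimum are reversed.} The branch $\frac14$ of $\min\{\varepsilon^4/s^2,1/4\}$ is active for $\varepsilon^2/s\ge\frac12$, i.e.\ in the large-deviation regime of your window argument. For $\varepsilon\ll\sqrt{s}$ the target is $(\varepsilon^2/s)^2e^{-17\varepsilon^2/s}$, which tends to $0$ but reaches about $2\cdot10^{-3}$ near $\varepsilon^2/s=2/17$.

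\emph{The exponent is incomplete, so the two regimes are not shown to meet.} Your exponent drops the constant contributed by the $\delta$-part of $t$. With tilted mean $\varepsilon s+\delta$, $\delta=Cs^{3/2}$ and tilted variance $\approx\frac43s^3$, the window bound is about $\frac12\exp\bigl(-\frac{3y^2}{8}-\frac{3Cy}{2}-\frac{9C^2}{8}\bigr)$ with $y=\varepsilon/\sqrt{s}$. This must be compared with $\min\{y^4,\frac14\}e^{-17y^2}$. Near $y=0.4$ the comparison holds for $C$ close to $\sqrt{8/3}$ but already fails for $C=2$. Meanwhile your direct Paley--Zygmund argument on $W_+$ is useful only while $\varepsilon s$ stays below a fixed fraction of $\mathbb{E}W_+$, i.e.\ for $y$ up to roughly $0.3$--$0.4$. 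So the hand-off between your two regimes is unverified, and ``tracking the constants'' is exactly the missing part of the proof.

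\emph{The fallback does not work as stated.} Paley--Zygmund for $(W-\varepsilon s)_+$ under $\mathbb{Q}$ bounds only $\mathbb{Q}(W\ge\varepsilon s)$. Going back through $\mathbb{P}(A)=M(t)\,\mathbb{E}_{\mathbb{Q}}[e^{-tW}\mathbf{1}_A]$ needs a lower bound on $e^{-tW}$ on $A$, i.e.\ an upper cap on $W$. The clean way around this is Cauchy--Schwarz, $\mathbb{P}(A)\ge\mathbb{E}[e^{tW}\mathbf{1}_A]^2/\mathbb{E}[e^{2tW}]$, which is the paper's mechanism.

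Your approach can be rescued without that step, but the check has to be done. Since $s=\varepsilon^2/y^2\le10^{-6}/y^2$, in the transitional range $s$ is tiny and the tilted variance is $\frac43s^3(1+o(1))$. Then $C\approx1.65$ is admissible, and by my rough numerical check the single window bound then dominates the target for all $y$.

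The paper needs neither tilted moments nor case distinctions. It applies Paley--Zygmund directly to $Y=e^{tW}$ with $t=3\varepsilon/s^2$ and $\theta=e^{-3\varepsilon^2/(2s)}$, using only Lemma~\ref{lemma_lower_H} at $t$ and $2t$.
\begin{itemize}
\item The lower bound $\log\mathbb{E}Y\ge t^2s^3/2=\frac32t\varepsilon s$, together with $\log\theta=-\frac12t\varepsilon s$, gives $\{Y\ge\theta\,\mathbb{E}Y\}\subseteq\{W\ge\varepsilon s\}$.
\item The ratio $\mathbb{E}[Y]^2/\mathbb{E}[Y^2]\ge e^{-\frac43t^2s^3-160t^3s^5}\ge e^{-33\varepsilon^2/(2s)}$ gives the exponent.
\item The inequality $(1-\theta)^2\ge\min\{x^2/2,\frac14\}$ with $x=3\varepsilon^2/(2s)$ yields exactly $\min\{\frac{9\varepsilon^4}{8s^2},\frac14\}$.
\end{itemize}
So the polynomial prefactor reflects $\theta$ being close to $1$ when $\varepsilon^2/s$ is small, not a tilted second-moment step. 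The argument is uniform in $\varepsilon/\sqrt{s}$. What it sacrifices is the sharp constant in the exponent, which is harmless in Section~\ref{sec_th3}, where any fixed constant can be absorbed by the choice of $\eta$.
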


\begin{proof}
Recall the Paley--Zygmund inequality, which asserts that for any non-negative random variable $Y$ with finite variance, and for any $0 \leq \theta \leq 1$, we have
$$
\mathbb{P} \left( Y \geq \theta \mathbb{E} [Y] \right) \geq (1-\theta)^2 \frac{\mathbb{E}[Y]^2}{\mathbb{E}[Y^2]}.
$$
Applying this to $Y = e^{t (H-s(1-s))}$ for some $t > 0$, we obtain
$$
\mathbb{P} \left( e^{t(H-s(1-s))} \geq \theta \mathbb{E} [e^{t (H-s(1-s))}] \right) \geq (1-\theta)^2 \frac{\mathbb{E} [e^{t (H-s(1-s))}]^2}{\mathbb{E} [e^{2t (H-s(1-s))}]}.
$$
By Lemma \ref{lemma_lower_H}, using the fact that the implied absolute constant can be chosen to be 16, we have
$$
\mathbb{P} \left( e^{t(H-s(1-s))} \geq \theta \mathbb{E} \left[e^{t (H-s(1-s))} \right] \right) \geq (1-\theta)^2 \frac{e^{\frac{4 t^2 s^3}{3} - 2t^2 s^4 - 32 t^3 s^5}}{e^{\frac{8 t^2 s^3}{3} - 4 t^2 s^4 +  128 t^3 s^5}} \geq (1-\theta)^2 e^{- \frac{4}{3} t^2 s^3 - 160 t^3 s^5},
$$
which is the same as 
$$
\mathbb{P} \left( t (H - s(1-s)) \geq \log \theta + \log \mathbb{E} \left[e^{t (H-s(1-s))}\right] \right) \geq (1-\theta)^2 e^{-\frac{4}{3} t^2 s^3 - 160 t^3 s^5};
$$
more precisely, this is true under the supposition that we will choose a value of $t$ which is within the admissible range for the application of Lemma \ref{lemma_lower_H} (which indeed we will). From Lemma \ref{lemma_lower_H} we also know that $\mathbb{E} [e^{t (H-s(1-s))}] \geq e^{\frac{2 t^2 s^3}{3} - t^2 s^4 - 16 t^3 s^5}$. Accordingly, choosing $t = \frac{3 \varepsilon}{s^{2}}$,  we have
\begin{eqnarray*}
\log \mathbb{E} \left[e^{t (H-s(1-s))}\right] & \geq & \frac{2 t^2 s^3}{3} - t^2 s^4 - 16 t^3 s^5 \\
& \geq & \frac{2 t^2 s^3}{3} - \frac{t^2 s^3}{100} - 48 \varepsilon t^2 s^3 \\
& \geq & \frac{2 t^2 s^3}{3} - \frac{t^2 s^3}{100} - \frac{48}{1000} t^2 s^3 \geq \frac{t^2 s^3}{2} = \frac{3 \varepsilon s t}{2},
\end{eqnarray*}
since $s \leq 1/100$ and $\varepsilon \leq 1/1000$ by assumption; note that with this choice of $t$ we have $t<1/(3s^2)$, as required for the application of Lemma \ref{lemma_lower_H}. Now we pick 
$$
\theta = e^{-\frac{t^2 s^3}{6}} = e^{-\frac{t \varepsilon s}{2}} = e^{-\frac{3\varepsilon^2}{2s}},
$$
and using $(1 - e^{-x})^2 \geq \min \{x^2/2, 1/4\}$ for all $x \geq 0$ (which can be easily checked using basic calculus) we get (after canceling a factor $t$ on both sides of the expression inside of $\mathbb{P} (\dots)$)
$$
\mathbb{P} \left( H - s(1-s) \geq -\frac{\varepsilon s}{2} + \frac{3 \varepsilon s}{2} \right) \geq  \min \left\{ \frac{9 \varepsilon^4}{8s^2}, \frac{1}{4} \right\} e^{- \frac{4}{3} t^2 s^3 - 160 t^3 s^5}.
$$
Inserting $t = \frac{3 \varepsilon}{s^{2}}$ gives $\frac{4}{3} t^2 s^3= \frac{12 \varepsilon^2}{s}$ and using that $160 t^3 s^5 = 480 \varepsilon t^2 s^3 \leq \frac{t^2 s^3}{2} = \frac{9 \varepsilon^2}{2s}$ by our assumption on the size of $\varepsilon$, we finally arrive at
$$
\mathbb{P} \left( H - s(1-s) \geq \varepsilon s \right) \geq  \min \left\{ \frac{9 \varepsilon^4}{8s^2}, \frac{1}{4} \right\} e^{-\frac{12 \varepsilon^2}{s}-\frac{9 \varepsilon^2}{2s}} \geq \min \left\{ \frac{\varepsilon^4}{s^2}, \frac{1}{4} \right\} e^{-\frac{17 \varepsilon^2}{s}} ,
$$
as desired.
\end{proof}

\section{Proof of Theorem \ref{th_1}}

According to the calculations culminating in \eqref{KMT_error_2}, for any $N$ and any $s$ there is a Brownian Bridge $B_N(x)$, together with a process $G_N(x)$ and a random variable $H_N$ as defined in \eqref{G_def} and \eqref{brown_int}, such that we can write
$$
\frac{V_N(s)}{N} = H_N + E_{N,s},
$$
where according to \eqref{KMT_error_2}
$$
|E_{N,s}| \leq 4  \frac{c \log N}{\sqrt{N}} \int_0^1 \left| G_N(x) \right| dx + \frac{4c^2 (\log N)^2}{N}
$$
with probability at least $1-N^{-100}$. By Cauchy--Schwarz we have
$$
\int_0^1 \left| G_N(x) \right| dx \leq \sqrt{\int_0^1 G_N(x)^2 dx} = \sqrt{H_N}. 
$$
Of course the distribution of $H_N$ is the same for all $N$, so we can simply write $H$ as defined in \eqref{G_def_without_N}. We pick $t = \frac{1}{3s^2}$ (the maximal admissible value in Lemma \ref{lemma_lower_H}) and $x = 303 s  \log N$, and using the Chernoff bound \eqref{chernoff} we obtain
$$
\mathbb{P} \left( H - s(1-s) \geq 303 s \log N \right) \leq \exp \left(-\frac{x}{3s^2} \right) \exp\left(\frac{2}{27 s} + \frac{16}{27 s} \right) \leq N^{-100}
$$
for sufficiently large $N$ (note that Lemma \ref{lem_H_tail} is not applicable with the values of $x$ and $s$ needed here, which is why we revert to Lemma \ref{lemma_lower_H} and the Chernoff bound). Accordingly,
$$
\mathbb{P} \left( \sqrt{H} \geq \left(\sqrt{303} + 1 \right) \sqrt{s \log N} \right) = \mathcal{O} \left(N^{-100} \right),
$$
and so 
$$
\mathbb{P} \left( |E_{N,s}| > 74 \frac{c \sqrt{s} (\log N)^{3/2}}{\sqrt{N}} + \frac{4 c^2(\log N)^2}{N} \right) = \mathcal{O} \left( N^{-100} \right). 
$$
This proves Theorem \ref{th_1}. 

\section{Almost sure behavior of $V_N(s)$. Proof of Theorem \ref{th_2}, convergence part.} \label{sec_th2}

In this section we prove the convergence part of Theorem \ref{th_2}. More precisely, as throughout the paper we write $V_N(s)$ for the number variance of i.i.d.\ uniform random points in $[0,1]$. We assume that $g(N)$ is any function which is monotonically decreasing towards $0$. Then the aim is to prove that 
$$
\frac{V_N(s)}{Ns} \to 1, \qquad \text{uniformly throughout the range } 0 \leq s \leq \frac{g(N)}{\log \log N},
$$
almost surely. 

\begin{lem} \label{lemma_lip1}
For any $N \geq 1$ and any $s,s' \in [0,1]$ with $s < s'$ we have 
$$
V_N (s) \leq V_N (s') + N^2 \left( (s')^2 - s^2 \right).
$$
\end{lem}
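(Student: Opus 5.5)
Use the first form of the definition \eqref{V_def}, namely $V_N(s) = \int_0^1 S_N(x,s)^2\,dx - N^2 s^2$. The point is that the counting function is monotone in the interval length. For $s < s'$ and every $x$, the interval $(x,x+s]$ is contained in $(x,x+s']$ as subsets of the torus. This holds as long as $s' \le 1$: both intervals start at $x$ and extend in the same direction, and the one of length $s'$ does not wrap onto itself. Hence
$$
0 \le S_N(x,s) \le S_N(x,s') \qquad \text{for all } x \in [0,1].
$$

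Squaring preserves this inequality because both sides are nonnegative, so $S_N(x,s)^2 \le S_N(x,s')^2$ pointwise. Integrating over $x$ gives
$$
\int_0^1 S_N(x,s)^2\,dx \le \int_0^1 S_N(x,s')^2\,dx .
$$

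Subtracting $N^2 s^2$ from both sides, we obtain
$$
V_N(s) = \int_0^1 S_N(x,s)^2\,dx - N^2s^2 \le \int_0^1 S_N(x,s')^2\,dx - N^2 (s')^2 + N^2\big((s')^2 - s^2\big) = V_N(s') + N^2\big((s')^2-s^2\big),
$$
which is the claimed inequality.

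The only step needing care is the equality of the two forms in \eqref{V_def}. It relies on $\int_0^1 S_N(x,s)\,dx = Ns$, which holds deterministically: each point $x_i$ lies in $(x,x+s]$ for a set of $x$ of measure exactly $s$ on the torus. This is what justifies using the form $\int S_N^2 - N^2s^2$. Apart from that, the argument is a direct monotonicity observation with no real obstacle.
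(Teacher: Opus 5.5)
Your proposal is correct and follows essentially the same route as the paper: pointwise monotonicity $S_N(x,s)\le S_N(x,s')$, squaring and integrating, then adding and subtracting $N^2(s')^2$. Your extra check that $\int_0^1 S_N(x,s)\,dx = Ns$ justifies the equality of the two forms in \eqref{V_def}; the paper takes that equality for granted.
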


We note that Lemma \ref{lemma_lip1} is not a probabilistic statement; it holds for any (deterministic) configuration $X_1, \dots, X_N$ of points in $[0,1]$. 

\begin{proof}
Recall the notation from \eqref{S_def} and the definition of $V_N$ in \eqref{V_def}. Since $s < s'$, we trivially have $S_N(x,s) \leq S_N(x,s')$. Accordingly, 
\begin{eqnarray*}
    V_N(s) & \leq & \int_0^1 S_N(x,s')^2 - N^2 s^2 ~dx \\
    & = & \int_0^1 S_N(x,s')^2 - N^2 (s')^2 ~dx + N^2 ((s')^2-s^2) \\
    & = & V_N(s') + N^2 ((s')^2-s^2).
\end{eqnarray*}
\end{proof}

\begin{lem} \label{lemma_lip2}
For any $N, N' \geq 1$ with $N < N'$ and any $s \in [0,1]$ we have
$$
V_N (s) \leq V_{N'} (s) + \left((N')^2 - N^2\right) s^2.
$$
\end{lem}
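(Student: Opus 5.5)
The plan mirrors the proof of Lemma \ref{lemma_lip1}, with monotonicity in $N$ replacing monotonicity in $s$. It uses only that the counting function is pointwise monotone in $N$. Since $N < N'$, the points $X_1, \dots, X_N$ form a subset of $X_1, \dots, X_{N'}$. By the definition \eqref{S_def} this gives, for every $x \in [0,1]$, the inequality $0 \leq S_N(x,s) \leq S_{N'}(x,s)$. Because both quantities are nonnegative, squaring preserves the inequality: $S_N(x,s)^2 \leq S_{N'}(x,s)^2$.

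Next I use the first form of the number variance in \eqref{V_def}, namely $V_N(s) = \int_0^1 S_N(x,s)^2 \, dx - N^2 s^2$. Integrating the pointwise inequality over $x \in [0,1]$ yields
\begin{eqnarray*}
V_N(s) & \leq & \int_0^1 S_{N'}(x,s)^2 \, dx - N^2 s^2 \\
& = & \int_0^1 S_{N'}(x,s)^2 \, dx - (N')^2 s^2 + \left((N')^2 - N^2\right) s^2 \\
& = & V_{N'}(s) + \left((N')^2 - N^2\right) s^2,
\end{eqnarray*}
which is the claim.

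The only point needing a moment's care is the equality of the two expressions in \eqref{V_def}. It holds because $\int_0^1 S_N(x,s)\,dx = Ns$ for any configuration of points, since each point lies in $(x,x+s]$ for a set of $x$ of measure exactly $s$ on the torus. This makes the cross term $-2Ns\int_0^1 S_N(x,s)\,dx = -2N^2s^2$, and together with the $+N^2s^2$ term the two forms agree. There is no real obstacle here. Like Lemma \ref{lemma_lip1}, the statement is deterministic and holds for every point configuration.
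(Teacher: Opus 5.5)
Your proof is correct and follows the paper's argument exactly. Like the paper, you use the pointwise bound $S_N(x,s)\le S_{N'}(x,s)$, square it, integrate, and add and subtract $(N')^2s^2$; your extra check that the two forms in \eqref{V_def} agree (via $\int_0^1 S_N(x,s)\,dx = Ns$) is a correct detail that the paper takes for granted.
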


Again, Lemma \ref{lemma_lip2} is not a probabilistic statement, it holds for any (deterministic) configuration of points. 

\begin{proof}
Since $N < N'$, we trivially have $S_N(x,s) \leq S_{N'}(x,s)$. Accordingly, 
\begin{eqnarray*}
    V_N(s) & \leq & \int_0^1 S_{N'}(x,s)^2 - N^2 s^2 ~dx \\
    & = & \int_0^1 S_{N'}(x,s)^2 - (N')^2 s^2 ~dx + \left((N')^2- N^2\right) s^2. \\
    & = & V_{N'}(s) + \left((N')^2- N^2\right) s^2.
\end{eqnarray*}
\end{proof}

\begin{lem}[``Microscopic regime''] \label{lem_th2_small}
We have 
$$
\frac{V_N(s)}{Ns} \to 1, \qquad \text{uniformly throughout the range } 0 \leq s \leq N^{-3/4},
$$
almost surely. 
\end{lem}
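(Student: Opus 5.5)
The plan is to avoid Theorem \ref{th_1} here, since its error term $\sqrt{s}(\log N)^{3/2}/\sqrt{N}$ is not $o(s)$ when $s$ is very small. Instead I will work with an exact combinatorial identity for $V_N(s)$ and a concentration argument for a pair-counting function. Write $d_{ij}=\|X_i-X_j\|$ for the torus distance and $R_N(u)=\#\{(i,j):\, i\neq j,\ d_{ij}\le u\}$. Expanding $S_N(x,s)^2$ as a double sum of indicators and integrating in $x$ gives the deterministic identity
$$
V_N(s) = Ns - N^2 s^2 + \sum_{i\neq j}(s-d_{ij})^+ = Ns - N^2s^2 + \int_0^s R_N(u)\,du .
$$
Since $\int_0^s 2N(N-1)u\,du = N(N-1)s^2$, this yields
$$
V_N(s) - Ns = -Ns^2 + \int_0^s \bigl(R_N(u) - 2N(N-1)u\bigr)\,du .
$$
The term $Ns^2$ is $o(Ns)$. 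The integral is bounded in absolute value by $s\sup_{u\le s}|R_N(u)-2N(N-1)u|$. It therefore suffices to show that almost surely
$$
\sup_{0\le u\le N^{-3/4}} \bigl|R_N(u)-2N(N-1)u\bigr| = o(N),
$$
because this gives $|V_N(s)-Ns|=o(Ns)$ uniformly for $s\le N^{-3/4}$. Note that $\mathbb{E}R_N(u)=2N(N-1)u$ for $u\le 1/2$.

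\textbf{Step 1: uniformity in $u$ via a grid.} I reduce the supremum to a grid $u_m=m N^{-2}$ with $0\le m\le N^{5/4}$. This works because $R_N$ is nondecreasing in $u$, while $2N(N-1)u$ changes by at most $2$ between consecutive grid points. A union bound over the $\le N^{5/4}+1$ grid points then costs only a polynomial factor. Combined with Borel--Cantelli over $N$, it is enough to show that for fixed $\delta>0$ and each grid value $u\le N^{-3/4}$,
$$
\mathbb{P}\bigl(|R_N(u)-\mathbb{E}R_N(u)|>\delta N\bigr)\le e^{-cN^{\gamma}}
$$
for some $\gamma>0$, or at least $\le N^{-4}$.

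\textbf{Step 2: concentration (main obstacle).} Chebyshev alone is not enough. The variance of the U-statistic $R_N(u)$ is of order $N^3u^2+N^2u\lesssim N^{3/2}$, which gives only the bound $N^{-1/2}$, and this is not summable. I would instead use a bounded-differences argument. Moving a single point $X_i$ changes $R_N(u)$ by at most $2(\nu_i+\nu_i')$, where $\nu_i$ and $\nu_i'$ count the other points within distance $u$ of the old and new positions. By a Chernoff bound for binomial variables, together with a union bound over a fine net of positions, the event
$$
A_N=\Bigl\{\text{every interval of length } 2N^{-3/4} \text{ contains at most } C N^{1/4} \text{ points}\Bigr\}
$$
has probability at least $1-e^{-cN^{1/4}}$. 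I handle the fact that this bound holds only on $A_N$ by truncation: replace $R_N(u)$ by a version $\tilde R_N(u)$ in which each point's neighbour count is capped at $CN^{1/4}$. This $\tilde R_N(u)$ agrees with $R_N(u)$ on $A_N$, and its expectation differs from $\mathbb{E}R_N(u)$ by an exponentially small amount. It also has bounded differences $O(N^{1/4})$ deterministically. A version of Kutin's ``typical bounded differences'' McDiarmid inequality would serve equally well.

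McDiarmid's inequality then gives
$$
\mathbb{P}\bigl(|\tilde R_N(u)-\mathbb{E}\tilde R_N(u)|>\delta N\bigr)\le 2\exp\Bigl(-\frac{c\delta^2N^2}{N\cdot N^{1/2}}\Bigr)=2e^{-c\delta^2N^{1/2}} .
$$
Multiplied by the $N^{5/4}$ grid points and summed over $N$, this is finite.

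\textbf{Step 3: conclusion.} By Borel--Cantelli, almost surely for all large $N$ we have $\sup_{u\le N^{-3/4}}|R_N(u)-2N(N-1)u|\le 3\delta N$. Inserting this into the identity gives $|V_N(s)/(Ns)-1|\le s+3\delta$ for all $s\le N^{-3/4}$. Since $\delta$ was arbitrary, this proves Lemma \ref{lem_th2_small}. The only delicate point is making the truncation rigorous in Step 2, in particular checking that the capped statistic still has expectation $2N(N-1)u+o(N)$.
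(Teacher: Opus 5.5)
Your argument is correct, but it takes a genuinely different route from the paper.

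The paper never introduces a pair-counting function. It computes the exact variance $\operatorname{Var}[V_N(s)]=\frac{2}{3}N(N-1)s^3(2-3s)\le\frac{4}{3}N^2s^3$. Chebyshev alone then gives $\mathbb{P}\bigl(|V_N(s)-Ns(1-s)|>k^{-1}Ns\bigr)\le\frac{4}{3}k^2s$, a bound on the \emph{relative} error that gets cheaper as $s$ shrinks. These bounds are summed over a geometric grid $s_v=2^{-3k/4}(1+2^{-k/3})^{-v}$, $v\ge 0$, which reaches down to $0$. They are also summed over a sparse set $N_j=2^{k-1}+j\lceil 2^{2k/3}\rceil$ of about $2^{k/3}$ indices per dyadic block, giving a total of $O(k^2 2^{-k/12})$. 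All other $N$ and $s$ are handled deterministically by the monotonicity Lemmas~\ref{lemma_lip1} and~\ref{lemma_lip2}.

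Your identity $V_N(s)-Ns=-Ns^2+\int_0^s\bigl(R_N(u)-\mathbb{E}R_N(u)\bigr)\,du$ instead turns uniformity in $s$ into the supremum of a single nondecreasing function. A finite grid in $u$ therefore suffices, and no interpolation lemma in $s$ is needed. The price is that you demand an \emph{additive} $o(N)$ bound on $R_N(u)$ for all $u\le N^{-3/4}$, which is governed by the largest $u$. Second moments are too weak for this, so you need an exponential inequality. That same inequality, however, lets you union-bound over every $N$, so no sparsification in $N$ and no analogue of Lemma~\ref{lemma_lip2} is required. In short, the paper's route is probabilistically lighter (first and second moments only). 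Yours is more direct and quantitatively stronger: McDiarmid at deviation $N^{3/4+\varepsilon}$ is still summable and yields $|V_N(s)/(Ns)-1|=O(N^{-1/4+\varepsilon})$ uniformly for $s\le N^{-3/4}$.

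The step that needs more justification than you give is not the expectation of $\tilde R_N(u)=\sum_i\min(\nu_i(u),K)$. That part is immediate from $0\le R_N-\tilde R_N\le N^2\mathbf{1}_{A_N^c}$. The real issue is the claimed deterministic bounded differences. Moving $X_i$ changes $\nu_j$ for every $j$ within distance $u$ of the old or the new position, and there may be far more than $K$ such $j$. The claim holds because of the one-dimensional geometry. Points in a window $[y,y+u]$ are pairwise within distance $u$. So if more than $K$ points other than $X_i$ lie in such a window, each of them has at least $K$ neighbours besides $X_i$ and stays at the cap both before and after the move. Applying this to the four half-windows on either side of the old and new positions bounds the change by $K+4K=5K$. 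Alternatively, Kutin's inequality avoids truncation altogether, since for every configuration in $A_N$ the raw $R_N(u)$ changes by at most $2CN^{1/4}$ when one coordinate is altered.

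A minor correction: on the torus the kernel $\mathbf{1}\{\|x-y\|\le u\}$ is degenerate, so $\operatorname{Var}R_N(u)=4N(N-1)u(1-2u)$ with no $N^3u^2$ term. This is the same cancellation the paper remarks on after \eqref{variance_size}. Chebyshev still fails for your union bound, so nothing in your argument changes.
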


Lemma \ref{lem_th2_small} is the first part of the proof of the convergence part of Theorem \ref{th_2}, for the range where $s$ is ``very small''. It is necessary to split off this part of the range of $s$, since the error terms from the KMT approximation (whose size is essentially described by the random variable $E_{N,s}$ in the statement of Theorem \ref{th_1}) become dominant in comparison with the desired main term (which is $\frac{V_N(s)}{N} \sim s$) when $s$ becomes smaller in order than $(\log N)^2 / N$. However, as we have seen the problem of establishing $V_N(s) \sim Ns$ becomes gradually easier when $s$ is smaller, so to prove Lemma \ref{lem_th2_small} for the range where $s$ is very small we do not need the full power of Theorem \ref{th_1} and the exponential tail bounds from Lemma \ref{lem_H_tail}. Instead, it suffices to work the first and second moments of $V_N(s)$, which can be calculated relatively easily by combinatorial methods.\\

\begin{proof}[Proof of Lemma \ref{lem_th2_small}] Let $X_1, X_2, \dots$ be i.i.d.\ random variables having uniform distribution on $[0,1]$. Then by definition
$$
V_N(s) = \int_0^1 \left(\sum_{n=1}^N \mathbf{1}_{[x,x+s]}(X_n) \right)^2~dx - N^2 s^2.
$$
The expected value of $V_N(s)$ is given by
\begin{align} \label{calculation_exp}
    \mathbb{E}[V_N(s)] =\int\limits_0^1 \mathbb{{E}}\left(\sum_{n=1}^N \mathbf{1}_{[x,x+s]}(X_n) \right)^2-N^2s^2~dx=Ns(1-s),
\end{align}
where we used that the integrand can be viewed as the variance of a binomial random variable $\operatorname{Bin}(N,s)$.
%Expanding the square and interchanging integration with summation, this gives
%$$ V_N(s) = \sum_{m,n=1}^N \int_{0}^1 \mathbf{1}_{[x,x+s]}(X_m) \mathbf{1}_{[x,x+s]}(X_n)~dx - N^2 s^2. $$
%Now
%$$ \int_{0}^1 \mathbf{1}_{[x,x+s]}(X_m) \mathbf{1}_{[x,x+s]}(X_n)~dx = \max \left\{s- \|X_m - X_n\|, 0  \right\}. $$
%When $m=n$, then clearly the expression in the line above is $s$. If $m \neq n$, then (by independence) $X_m - X_n$ has the same distribution as $X_1$, and accordingly
%\begin{eqnarray} 
%\mathbb{E} [V_N(s)] & = & N s + N (N-1) \mathbb{E} \left( \max \left\{s- \|X_1\|, 0  \right\} \right) - N^2 s^2 \nonumber\\
%& = & Ns + N(N-1) s^2 - N^2 s^2 \nonumber \\
%& = & N s (1-s). \label{calculation_exp}
%\end{eqnarray}
Using properties of the variance, we can further calculate
\begin{align*}
\mathrm{Var}[V_N(s)]&=\int_0^1\int_0^1\mathrm{Cov}\left(\left(\sum_{n=1}^N \mathbf{1}_{[x,x+s]}(X_n)\right)^2,\left(\sum_{n=1}^N \mathbf{1}_{[y,y+s]}(X_n)\right)^2\right)~dx~dy\\
&=\sum_{n_1,m_1=1}^N\sum_{n_2,m_2=1}^N \int_0^1\int_0^1I(m_1,n_1,m_2,n_2,x,y)~dx~dy,
\end{align*}
where $I\eqqcolon I(m_1,n_1,m_2,n_2,x,y)$ denotes
\begin{align*}
\mathbb{P}(X_{m_1},X_{n_1}\in [x,x+s],&X_{m_2},X_{n_2}\in [y,y+s]) \\ &\quad -
\mathbb{P}(X_{m_1},X_{n_1}\in [x,x+s])\mathbb{P}(X_{m_2},X_{n_2}\in [y,y+s]).
\end{align*}
%\begin{eqnarray*}
%\operatorname{Var}[V_N(s)] & = & \mathbb{E} \left[ \left( \sum_{\substack{1 \leq m,n \leq N}} \int_{0}^1 \mathbf{1}_{[x,x+s]}(X_m) \mathbf{1}_{[x,x+s]}(X_n)~dx -N^2s^2\right)^2 \right] -\mathbb{E}[V_N(s)]^2\\
%& = & \mathbb{E} \left[  \sum_{\substack{1 \leq m_1,m_2,n_1,n_2 \leq N}} \int_{0}^1 \int_{0}^1\mathbf{1}_{[x,x+s]}(X_{m_1}) \mathbf{1}_{[x,x+s]}(X_{n_1}) \right. \\
%& & \qquad \qquad \qquad \qquad \left. \phantom{\sum_{\substack{1 \leq m_1,m_2,n_1,n_2 \leq N}}}  \mathbf{1}_{[y,y+s]}(X_{m_2}) \mathbf{1}_{[y,y+s]}(X_{n_2})~dx ~dy \right] \\& & \qquad - 2N^2s^2\mathbb{E}\left[\sum_{\substack{1 \leq m,n \leq N}} \int_{0}^1 \mathbf{1}_{[x,x+s]}(X_m) \mathbf{1}_{[x,x+s]}(X_n)~dx \right] \\
%& & \qquad +N^4s^4-N^2s^2(1-s)^2
%\\ & = & \sum_{\substack{1 \leq m_1,m_2,n_1,n_2 \leq N}} \int_{0}^1 \int_{0}^1 I(x,y,m_1,n_1,m_2,n_2) ~dx ~dy \\ & & \qquad-2N^2s^2\left(Ns(1-s)+N^2s^2\right)+N^4s^4-N^2s^2(1-s)^2,
%\end{eqnarray*}
%where $I$ denotes
%$$\mathbb{E}\left[\mathbf{1}_{[x,x+s]}(X_{m_1}) \mathbf{1}_{[x,x+s]}(X_{n_1})\mathbf{1}_{[y,y+s]}(X_{m_2}) \mathbf{1}_{[y,y+s]}(X_{n_2})\right].$$
Note that whenever $\{m_1,n_1\}\cap\{m_2,n_2\}=\emptyset$, we have $I=0$, because of independence of the pairs $(m_1,n_1)$ and $(m_2,n_2)$. 

%$$\mathbb{E} \left[\mathbf{1}_{[x,x+s]}(X_k)\mathbf{1}_{[y,y+s]}(X_k) \right]=\lambda([x,x+s]\cap[y,y+s])=\max\{s-\|x-y\|,0\}.$$

We need to distinguish several different cases, depending on the properties of the 4-tuple $(m_1,n_1,m_2,n_2)$. Before doing that, note that the length of $[x,x+s]\cap [y,y+s]$ is given by $$\max\{s-\|x-y\|,0\}.$$

Moreover, we have $$ \int_0^1\int_0^1\max\{s-\|x-y\|,0\}\,~dx ~dy= s^2$$  
and 
$$\int_0^1\int_0^1\left(\max\{s-\|x-y\|,0\}\right)^2\,~dx ~dy= \frac{2}{3}s^3.$$
Cases when $|\{m_1,n_1\}\cap\{m_2,n_2\}|=1$:\\

\begin{itemize}
\item If $m_1=n_1=m_2=n_2$, then $$I=\max\{s-\|x-y\|,0\}-s^2,$$ and hence $\int_0^1\int_0^1 I~dx~dy=0$.\\
\item If $m_i=n_i=m_j\neq n_j$, then $$I=\max\{s-\|x-y\|,0\}s-s^3,$$ and hence $\int_0^1\int_0^1 I~dx~dy=0$.\\

\item If $m_1 \neq n_1$ and $m_2 \neq n_2$ with $\{m_1,n_1\} \cap \{m_2, n_2\} = 1$, then 
$$ I=\max\{s-\|x-y\|,0\}s^2-s^4, $$
and hence  $\int_0^1\int_0^1 I~dx~dy=0$.\\

\end{itemize}
Finally, if $|\{m_1,n_1\}\cap\{m_2,n_2\}|=2$, then 
$$
I=\left(\max\{s-\|x-y\|,0\}\right)^2-s^4.
$$
The number of such 4-tuples is $2 N (N-1)$. \\

Overall, this gives
\begin{eqnarray}
    \operatorname{Var}[V_N(s)] &=& 2N(N-1) \left(\frac{2}{3}s^3-s^4\right) \nonumber \\
    & = & N(N-1)\cdot \frac{2 s^3(2-3s)}{3} \label{variance_size}\\
    & \leq & \frac{4}{3} N^2 s^3. \label{variance_bound}
\end{eqnarray}
Note how \eqref{variance_size} matches together (after re-scaling) with \eqref{sum_lambda_3}. In probabilistic language, the term \eqref{variance_size} is the variance of a degenerate U-statistic of order 2. Generally, such U-statistic variances are typically intricate to calculate, but in our particular situation, the kernel function -- defined by the measure of the intersection of two intervals -- has a very simple geometric structure. This simplicity allows for a direct combinatorial calculation with reasonable effort, bypassing the need for complex spectral decompositions at this stage. Note also how the leading-order terms $N^3$ and $N^4$ cancel out to yield \eqref{variance_size}, which is typical for the variance of degenerate U-statistics. This pertains to the Hoeffding decomposition of $V_N$, and reflects the fact that the number variance is ``purely quadratic'' (in the language of U-statistics) -- it depends only on the pairwise interactions of points, not on their individual positions. We note in passing that the use of U-statistics to interpret number-theoretic aspects of local statistics, such as the pair correlation, appeared already in the work of Berkes, Philipp and Tichy \cite{bpt}.\\

Let $k \geq 1$ be given. We will consider all integers of the form
\begin{equation} \label{jvw1}
N_j = 2^{k-1} + j \lceil 2^{2k/3} \rceil, \qquad 0 \leq j \leq \lceil 2^{k/3-1} \rceil.
\end{equation}
Furthermore, we will consider all values of $s$ of the form
\begin{equation} \label{jvw2}
s_v = \frac{1}{2^{3k/4}} \left( 1 + \frac{1}{2^{k/3}} \right)^{-v}, \qquad  v = 0,1,2, \dots
\end{equation}
Note that here we study infinitely many values of $s_v$, starting from $s_0 = 2^{-3k/4}$ (which is of order roughly $N_j^{-3/4}$) all the way down to arbitrarily small (``microscopic'') values of $s_v$. It will be possible to take a union bound over an infinite index set here since (as discussed above) the problem becomes easier when $s$ becomes smaller, and it will turn out that the arising exceptional probabilities associated with the discretizations points $s_v$ can be summed over $v=0,1,2,\dots$.\\

We define sets
$$
A_{j,v} = \left\{ \left| V_{N_j}(s_v) - N_j s_v (1-s_v) \right| > k^{-1} N_j s_v\right\}.
$$
Then, according to \eqref{variance_bound} and Chebyshev's inequality, we have
$$
\mathbb{P} (A_{j,v}) \leq \frac{4 k^2 s_v}{3} = \mathcal{O} \left(\frac{k^2}{2^{3k/4}} \left( 1 + \frac{1}{2^{k/3}} \right)^{-v} \right).
$$
By the union bound and the formula for the geometric series this yields
\begin{equation} \label{borel_1}
\mathbb{P} \left(\bigcup_{0 \leq j \leq \lceil 2^{k/3-1} \rceil} \bigcup_{v \geq 0} A_{j,v} \right) = \mathcal{O} \left(k^2 2^{-k/12} \right).
\end{equation}
Clearly the term $k^2 2^{-k/12}$ on the right-hand side of \eqref{borel_1} is summable in $k$. Accordingly, by the Borel--Cantelli lemma, with probability one there are only finitely many values of $k$ such that an event $A_{j,v}$ occurs (for any choice of $j,v$ which is admissible according to the construction in equations \eqref{jvw1} and \eqref{jvw2}).\\

This discretization step, together with the application of Chebyshev's inequality and the Borel--Cantelli lemma, was the crucial part in the proof of Lemma \ref{lem_th2_small}. It remains to carry out an approximation argument, which allows us to pass from  $V_N(s)$ for special values of $N$ and $s$ (namely $N$ of the form $N_j$ and $s$ of the form $s_v$) to the general case. Assume that we have some $N \in \{2^{k-1}, \dots, 2^k\}$ for a value of $k$ such that none of the events $A_{j,v}$ (for any values of $j,v$ that are admissible for such $k$) has occurred. Assume that $s \leq 2^{-3k/4}$. Then there exists a value of $j$ such that $N_j \leq N \leq N_{j+1}$, together with a value of $v$ such that $s_{v+1} \leq s \leq s_{v}$ (note that $s_v$ is decreasing as a function of $v$). By construction $N_{j+1} - N_j \leq \lceil 2^{2k/3} \rceil$ and $s_{v} - s_{v+1} = 2^{-k/3} s_{v+1} \leq 2^{-k/3} s$. Thus, according to Lemmas \ref{lemma_lip1} and \ref{lemma_lip2}, we have
\begin{eqnarray}
V_N(s) & \leq & V_{N_{j+1}} (s) + (N_{j+1}^2 - N^2) s^2 \label{chain_1}\\
& \leq & V_{N_{j+1}} (s_{v}) + N_{j+1}^2 (s_{v}^2 - s^2) + (N_{j+1}^2 - N^2) s^2 \nonumber\\
& \leq & V_{N_{j+1}} (s_{v}) + 2N_{j+1}^2 (s_{v} - s) s_{v} + 2(N_{j+1} - N) N_{j+1} s^2 \nonumber\\
& \leq & V_{N_{j+1}} (s_{v}) + \mathcal{O} \left(2^{k} N 2^{-k/3} s 2^{-3k/4} + 2^{2k/3} N 2^{-3k/4} s \right) \nonumber\\
& = & V_{N_{j+1}} (s_{v}) + \mathcal{O} \left(2^{-k/12} N s\right). \nonumber
\end{eqnarray}
Since we assumed that $A_{j+1,v}$ has not occurred, we have
$$
\left| V_{N_{j+1}} (s_{v}) - N_{j+1} s_{v} \right| \leq k^{-1} N_{j+1}  s_{v} + N_{j+1} s_v^2. 
$$
Since 
\begin{eqnarray*}
N_{j+1} s_{v} - N s & = & N_{j+1} (s_{v} - s) + s (N_{j+1} - N) \\
& = & \mathcal{O} \left( N 2^{-k/3} s + s N 2^{-k/3} \right) \\
& = & \mathcal{O} \left(2^{-k/3} Ns \right), 
\end{eqnarray*}
and since $N_{j+1} s_v^2 = \mathcal{O} (Ns 2^{-3k/4})$, overall this yields
\begin{equation} \label{vn_upper}
V_N(s) \leq N s + \mathcal{O} \left( k^{-1} N s \right),
\end{equation}
where the implied constant is independent of $N$, $s$ and $k$. In the same way we can give a corresponding lower bound for $V_N(s)$, by comparing it to $V_{N_j} (s_{v+1})$. To get this, we start with 
$$
V_N(s)  \geq V_{N_{j}} (s) - (N_{j}^2 - N^2) s^2
$$
(from Lemma \ref{lemma_lip2}), and then follow a chain of inequalities similar to those in \eqref{chain_1}--\eqref{vn_upper} (using also Lemma \ref{lemma_lip1} at the suitable location to switch from $V_{N_j}(s)$ to $V_{N_j}(s_{v+1})$) to obtain 
\begin{equation} \label{vn_lower}
V_N(s) \geq N s - \mathcal{O} \left( k^{-1} N s \right),
\end{equation}
with an implied constant that is independent of $N$, $s$ and $k$. As deduced above from the application of the Borel--Cantelli lemma, almost surely both inequalities \eqref{vn_upper} and \eqref{vn_lower} hold (uniformly in the relevant range of $s$) for all sufficiently large $k$, i.e.\ for all sufficiently large $N$. Accordingly, overall we have shown that almost surely
$$
\frac{V_N(s)}{Ns} \to 1, \qquad \text{uniformly for $s \in (0 , N^{-3/4}]$,}
$$
as claimed.
\end{proof}

Now we come to the part of Theorem \ref{th_2} which covers the larger values of $s$. Mathematically this is the more difficult range, which requires the exponential tail estimates which we derived in Lemma \ref{lem_H_tail} as a consequence of Theorem \ref{th_1}. We split the full range $[N^{-3/4}, \frac{g(N)}{\log \log N}]$ for $s$ into the union of $[N^{-3/4}, (\log N)^{-2}]$ and $[(\log N)^{-2}, \frac{g(N)}{\log \log N}]$, which the last one being the most delicate one.

\begin{lem}[``Mesoscopic Regime''] \label{lem_th2_large_a}
We have 
$$
\frac{V_N(s)}{Ns} \to 1, \qquad \text{uniformly throughout the range } N^{-3/4} \leq s \leq \frac{1}{(\log N)^2},
$$
almost surely. 
\end{lem}

\begin{lem}[``Macroscopic Regime''] \label{lem_th2_large_b}
We have 
$$
\frac{V_N(s)}{Ns} \to 1, \qquad \text{uniformly throughout the range } \frac{1}{(\log N)^2} \leq s \leq \frac{g(N)}{\log \log N},
$$
almost surely. 
\end{lem}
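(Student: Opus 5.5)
The plan is to run the same discretization and Borel--Cantelli scheme as in the proof of Lemma \ref{lem_th2_small}. The difference is that the Chebyshev bound is replaced by the exponential tail bounds from Theorem \ref{th_1} and Lemma \ref{lem_H_tail}. Fix $k$ and consider $N \in \{2^{k-1},\dots,2^k\}$. We discretize $N$ by a grid $N_j = 2^{k-1} + j\lceil 2^{k}/k^{C}\rceil$, so there are only polynomially many in $k$ (or we could even take spacing $2^{k/2}$; a polynomial count in $2^k$ is also affordable here). We discretize $s$ geometrically, $s_v = (\log 2^{k})^{-2}(1+1/k^{C})^{v}$, running from about $k^{-2}$ up to $g(2^{k-1})/\log\log 2^{k-1}$; this needs only $\mathcal{O}(k^{C}\log k)$ points. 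For each grid pair we define the bad event
$$A_{j,v} = \bigl\{ |V_{N_j}(s_v) - N_j s_v(1-s_v)| > \delta_k N_j s_v \bigr\},$$
where $\delta_k \to 0$ will be chosen as a slowly decaying function tied to $g$. A natural choice is $\delta_k = g(2^{k-1})^{1/4}$.

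To bound $\mathbb{P}(A_{j,v})$, we use Theorem \ref{th_1} to write $V_{N_j}(s_v)/N_j \overset{d}{=} H + E_{N_j,s_v}$. Since $s_v \geq (\log N)^{-2}$, the error bound $c\sqrt{s}(\log N)^{3/2}/\sqrt{N}$ is far below $\delta_k s_v/2$, except on an event of probability $\mathcal{O}(N^{-100})$. It then suffices to bound $\mathbb{P}(|H - s_v(1-s_v)| > \delta_k s_v/2)$. Lemma \ref{lem_H_tail} applies because $x = \delta_k s_v/2 \leq s_v/100$ for large $k$, and it gives the bound $2\exp(-\delta_k^2/(16 s_v))$. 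This is the crucial step. Using $s_v \leq g(N)/\log\log N$ with $\log\log N \asymp \log k$, we get $\delta_k^2/(16 s_v) \geq \delta_k^2 \log k/(16 g(2^{k-1}))$. With $\delta_k = g^{1/4}$ this equals $\log k/(16\sqrt{g}) \geq 10\log k$ for large $k$, so the probability is at most $k^{-10}$.

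The union bound over the $\mathcal{O}(k^{C}\cdot k^{C}\log k)$ grid points must then still be summable, so we choose $C$ small, say $C = 2$. Each event has probability $\mathcal{O}(k^{-10}+2^{-100k})$, so the union over a level is $\mathcal{O}(k^{-5})$, which is summable in $k$. By Borel--Cantelli, almost surely only finitely many $A_{j,v}$ occur. One has to check that the exponent $1/\sqrt{g}\to\infty$ dominates the grid count; if $g$ decays extremely slowly this still works, because the count is polynomial in $k$ while the bound is $k^{-\omega(1)}$.

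The interpolation step uses Lemmas \ref{lemma_lip1} and \ref{lemma_lip2} exactly as in \eqref{chain_1}. The errors are $N^2 s\,(s_{v+1}-s_v) \lesssim N^2 s^2 k^{-C}$ and $(N_{j+1}-N)N s^2 \lesssim N^2 s^2 k^{-C}$. Relative to $Ns$ these are $N s\, k^{-C}$, which is \emph{not} small since $Ns$ is huge. This is the main obstacle: the trivial Lipschitz bounds lose a factor $N s$. I would resolve it by making the grids much finer. Take $N$-spacing $1$ (all $N$ in the dyadic block, $2^k$ of them) and $s$-spacing ratio $(1+2^{-2k})$, so the grid has about $2^{3k}$ points. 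The tail bound must then beat $2^{-3k}$. Lemma \ref{lem_H_tail} alone only gives $k^{-\omega(1)}$, so instead I would exploit monotonicity more cleverly. For $s < s'$ one has $S_N(x,s) \leq S_N(x,s')$ and $S_N(x,s') - S_N(x,s)$ is a count in a short interval. Writing $V_N(s')-V_N(s)$ via $\int (S(s')-S(s))(S(s')+S(s)) - N^2(s'^2-s^2)$ and bounding the short-interval counts uniformly with high probability (a Bernstein/KMT sup bound) gives $|V_N(s')-V_N(s)| \lesssim N(s'-s)\,\sqrt{N s\log N} + N^2 s (s'-s)$. With ratio $1+k^{-C}$ this is still large, so one needs $s'-s \lesssim s\, N^{-1}\cdot o(1)$, i.e.\ a grid of size about $N\log N$ per block. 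The paper's KMT coupling then gives the bound for all $s$ simultaneously for a fixed $N$. In other words, via \eqref{KMT_main} we replace $V_N(s)/N$ by $H_N(s)$ uniformly in $s$ for the same Brownian bridge, and $H_N$ is Lipschitz in $s$ with high probability. So the $s$-grid can stay coarse, and only the $N$-direction, with $2^k$ points, requires the tail bound $\ll 2^{-k}$. This last point, making the per-$N$ bad probability summable over all $N$, is where I expect the real difficulty: the tail $\exp(-\delta^2/(16s))$ is only $(\log N)^{-\omega(1)}$. It would have to be handled by a blocking/maximal-inequality argument in $N$ (increments $V_{N'}-V_N$ over blocks of length $N/k^{C}$ are controlled by the above difference bounds).
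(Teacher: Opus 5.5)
\textbf{There is a genuine gap: you never pass from the grid to all $N$ and $s$.} Your grid-plus-tail-bound step is sound and matches the paper. On a grid polynomial in $k$ (the paper uses $s_v=v/k^4$ and $N_j=2^{k-1}+j2^{k-1}/k^4$), Lemma \ref{lem_H_tail} with threshold $g^{1/4}s_vN_j$ gives bad probabilities $k^{-1/(8\sqrt{g_k})}=k^{-\omega(1)}$, which beats the grid count. Your parenthetical claim that an $N$-spacing of $2^{k/2}$ (i.e.\ $2^{k/2}$ grid points) would also be affordable is wrong, for the reason you give at the end.

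You correctly see that Lemmas \ref{lemma_lip1} and \ref{lemma_lip2} lose a factor $Ns$. None of your replacements closes the argument, though:
\begin{itemize}
\item a finer grid is unaffordable, as you note yourself;
\item the Lipschitz property of $H_N$ in $s$ is only asserted;
\item the $N$-direction, which you call ``the real difficulty'', is left to an unspecified blocking/maximal-inequality argument, with block increments controlled by difference bounds you have just shown to be too weak.
\end{itemize}
As written, the proposal proves the claim only at the grid points $(N_j,s_v)$.

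\textbf{The missing idea is to compare centred quantities by Cauchy--Schwarz instead of using monotonicity.} Your own monotonicity computation already contains it. Since $\int_0^1(S_N(x,s)-Ns)\,dx=0$, the $N^2$-terms cancel exactly. Then $V_N(s')-V_N(s)$ is the integral of the difference times the sum of the centred counts, which gives Lemma \ref{lemma_lip3}.

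\emph{The $s$-direction.} Take $|s-s_v|\le k^{-4}$. The already proved Lemmas \ref{lem_th2_small} and \ref{lem_th2_large_a} give $V_N(|s-s_v|)\le 2Nk^{-4}$. Hence $|V_N(s)-V_N(s_v)|=\mathcal{O}(Nk^{-4}+N\sqrt{s}\,k^{-2})=o(sN)$ for $s\ge(\log N)^{-2}$. This needs no finer grid and no regularity of $H_N$.

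\emph{The $N$-direction.} Split $X_1,\dots,X_N$ into $X_1,\dots,X_{N_j}$ and a short block $X_{N_j+1},\dots,X_{N_j+r}$ with $r<2^{k-1}/k^4$. The same expansion gives Lemma \ref{lemma_lip4}:
\[
|V_N(s_v)-V_{N_j}(s_v)|\le V_{N_j+1,r}(s_v)+2\sqrt{V_{N_j}(s_v)V_{N_j+1,r}(s_v)}.
\]
The short block's number variance needs no delicate tail estimate.
\begin{itemize}
\item Crudely, $V_{N_j+1,r}(s_v)\le 4\sup_x F_r(x)^2$, where $F_r$ is the centred empirical counting function of those $r$ points.
\item By KMT, $F_r\approx\sqrt r\,B$. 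The Kolmogorov distribution then gives $\mathbb{P}(r\max_x B(x)^2>s_v^{3/2}2^k)\le 2e^{-s_v^{3/2}2^k/r}\le 2e^{-s_v^{3/2}k^4}\le 2e^{-k}$, using $s_v\ge k^{-2}$.
\item This bound survives a union bound over all $\mathcal{O}(2^kk^4)$ triples $(v,j,r)$ per dyadic block. For $r\le 2^{k/4}$, where the KMT exceptional probability is not small, use the trivial bound $V_{N_j+1,r}(s_v)\le r^2s_v$ instead.
\end{itemize}
Hence $V_{N_j+1,r}(s_v)=\mathcal{O}(s^{3/2}N)$. Therefore $|V_N(s_v)-V_{N_j}(s_v)|=\mathcal{O}(s^{3/2}N+s^{5/4}N)=o(sN)$ uniformly, which is exactly the step your proposal lacks.
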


For the proof of Lemmas \ref{lem_th2_large_a} and \ref{lem_th2_large_b}, we need two approximation results in the spirit of Lemmas \ref{lemma_lip1} and \ref{lemma_lip2}. The first allows a discretization for $s$, the second (which is only needed for Lemma \ref{lem_th2_large_b}) allows to pass to a subset of the set of values of $N$ that actually need to be considered.

\begin{lem} \label{lemma_lip3}
For any $s,s' \in [0,1/2]$ and any $N \geq 1$ we have
$$
\left| V_N(s) - V_N(s') \right| \leq V_N \left( \left| s- s' \right| \right) + 2 \sqrt{V_N(s') V_N(|s-s'|)}.
$$
\end{lem}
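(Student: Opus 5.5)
The plan is to work directly with the centered counting functions and use only the additivity of interval counts together with the translation invariance of Lebesgue measure on the torus; no probability is needed, since the inequality is deterministic like Lemmas \ref{lemma_lip1} and \ref{lemma_lip2}. For $u \in [0,1]$ write $f_u(x) = S_N(x,u) - Nu$, so that $V_N(u) = \int_0^1 f_u(x)^2\,dx$ by \eqref{V_def}. Set $d = |s-s'|$, and note $d \in [0,1/2]$ since $s,s' \in [0,1/2]$.

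\textbf{Step 1: additivity of $f$.} Suppose first that $s > s'$. On the torus, the interval $(x,x+s]$ is the disjoint union of $(x,x+s']$ and $(x+s',x+s]$. The second piece is a translate of an interval of length $d$. Hence
$$
S_N(x,s) = S_N(x,s') + S_N(\{x+s'\},d).
$$
Subtracting $Ns = Ns' + Nd$ gives $f_s(x) = f_{s'}(x) + f_d(\{x+s'\})$.

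In the case $s < s'$, the same decomposition with the roles exchanged gives $f_{s'}(x) = f_s(x) + f_d(\{x+s\})$. Equivalently, $f_s(x) = f_{s'}(x) - f_d(\{x+s\})$.

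\textbf{Step 2: expand the square.} In both cases $f_s = f_{s'} \pm g$, where $g(x) = f_d(\{x+a\})$ for a fixed shift $a \in \{s,s'\}$. Since the substitution $x \mapsto \{x+a\}$ preserves Lebesgue measure on the torus,
$$
\int_0^1 g(x)^2\,dx = \int_0^1 f_d(x)^2\,dx = V_N(d).
$$
Integrating the square of $f_s = f_{s'} \pm g$ therefore gives
$$
V_N(s) - V_N(s') = V_N(d) \pm 2\int_0^1 f_{s'}(x)\, g(x)\,dx.
$$

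\textbf{Step 3: Cauchy--Schwarz.} Bound the cross term by
$$
\left|\int_0^1 f_{s'} g\,dx\right| \leq \sqrt{\int_0^1 f_{s'}^2\,dx}\,\sqrt{\int_0^1 g^2\,dx} = \sqrt{V_N(s')V_N(d)}.
$$
Taking absolute values in Step 2 then yields
$$
|V_N(s)-V_N(s')| \leq V_N(d) + 2\sqrt{V_N(s')V_N(d)},
$$
which is the claim.

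The only point requiring care is Step 1: the interval decomposition must be exact on the torus, including the wrap-around case. The half-open convention $(x,x+s]$ from \eqref{S_def} makes the two pieces disjoint with no double-counted endpoints. Apart from this, the argument is routine.
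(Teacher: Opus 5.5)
Your proof is correct and is essentially the paper's argument: decompose the interval additively on the torus, expand the square, identify the shifted term with $V_N(|s-s'|)$ by translation invariance, and apply Cauchy--Schwarz to the cross term. Your treatment of the case $s<s'$ is more explicit than the paper's ``similar arguments apply''. There you expand $f_s = f_{s'} - g$, so that $V_N(s')$ rather than $V_N(s)$ appears under the square root, exactly as the statement requires.
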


\begin{proof}
    Recall the definition (\ref{V_def}) of $V_N(s)$. Assume that $0<s^\prime<s <1/2$. We have
    \begin{eqnarray}
V_N(s) & = & \int_0^1 \left( S_N(x,s) - Ns \right)^2 dx \nonumber \\
& = & \int_0^1 \left(  \sum_{n=1}^N \mathbf{1}_{[x,x+s]}(X_n) - Ns \right)^2 dx\nonumber \\
& = & \int_0^1 \left( \sum_{n=1}^N \mathbf{1}_{[x,x+s^\prime]}(X_n) - Ns^\prime+ \sum_{n=1}^N \mathbf{1}_{[x+s^\prime,x+s]}(X_n) - N(s-s^\prime) \right)^2 dx \nonumber \\
& = & V_N(s^\prime)+V_N(s -s^\prime)+\nonumber \\
& & \qquad  +2\int_0^1 \left( \sum_{n=1}^N \mathbf{1}_{[x,x+s^\prime]}(X_n) - Ns^\prime\right)\left(\sum_{n=1}^N \mathbf{1}_{[x+s^\prime,x+s]}(X_n) - N(s-s^\prime) \right) dx.  \label{prod1}
\end{eqnarray}
Applying the Cauchy-Schwarz inequality to (\ref{prod1}), we obtain 
\begin{eqnarray*}
    \left| V_N(s) - V_N(s') \right| \leq V_N \left( s-s^\prime \right) + 2 \sqrt{V_N(s') V_N(s-s')}.
\end{eqnarray*}
Similar arguments apply to the case $0<s<s^\prime<1/2$.
\end{proof}

During the proofs of Lemma \ref{lem_th2_large_a} and \ref{lem_th2_large_b}, we will switch from the continuous range of all parameters $s$ to a (sufficiently dense) discrete range $(s_v)_v$, where $v$ runs through a suitable index set. Lemma \ref{lemma_lip3} essentially allows us to control the deviation between $V_N(s)$ and $V_N(s_v)$ in terms of $V_N(|s-s_v|)$. Now the crucial point is that if we can ensure that $|s-s_v|$ is ``small'', then we already know from Lemma \ref{lem_th2_small} that $V_N(|s-s_v|)$ is (almost surely) well-behaved. In other words, we can efficiently utilize the information from Lemma \ref{lem_th2_small} for the proof of Lemma \ref{lem_th2_large_a}. In the same way, in the following step we can utilize the conclusion of Lemma \ref{lem_th2_large_a}, together with Lemma \ref{lemma_lip3}, for the proof of Lemma \ref{lem_th2_large_b}.\\

For the statement of the next lemma, we introduce the notation $V_{N+1,M-N}$ for the number variance of the points $X_{N+1}, \dots, X_M$.
\begin{lem} \label{lemma_lip4}
For any $M \geq N \geq 1$ and any $s \in [0,1/2]$ we have
$$
\left| V_N(s) - V_M(s)\right| \leq V_{N+1,M-N} (s) + 2 \sqrt{V_N(s) V_{N+1,M-N} (s)}.
$$
\end{lem}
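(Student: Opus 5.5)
The approach mirrors the proof of Lemma \ref{lemma_lip3}, but now we split the sample rather than the interval. For $x \in [0,1]$ we decompose the counting function of $X_1, \dots, X_M$ as
$$
S_M(x,s) - Ms = \Big( S_N(x,s) - Ns \Big) + \Big( S'(x,s) - (M-N)s \Big),
$$
where $S'(x,s) = \#\{N+1 \leq i \leq M:~ X_i \in (x,x+s]\}$ is the counting function of the points $X_{N+1}, \dots, X_M$. This identity holds pointwise and deterministically, since both the counts and the expected counts are additive over disjoint index sets. By definition \eqref{V_def},
$$
\int_0^1 \big(S'(x,s) - (M-N)s\big)^2 dx = V_{N+1,M-N}(s).
$$

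Next we square the decomposition and integrate over $x \in [0,1]$. This gives
$$
V_M(s) = V_N(s) + V_{N+1,M-N}(s) + 2 \int_0^1 \big(S_N(x,s) - Ns\big)\big(S'(x,s) - (M-N)s\big)\,dx .
$$
Moving $V_N(s)$ to the left-hand side and taking absolute values, the only remaining task is to bound the cross term. By the Cauchy--Schwarz inequality in $L^2([0,1])$, its absolute value is at most
$$
2\sqrt{V_N(s)\,V_{N+1,M-N}(s)}.
$$
Since $V_{N+1,M-N}(s) \geq 0$, the triangle inequality then yields exactly the claimed bound.

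The argument involves no real obstacle; it is simpler than Lemma \ref{lemma_lip3} because no shifted interval appears. The only points to check are the following.
\begin{itemize}
\item The case $M = N$ is trivial, with the empty sample giving $V_{N+1,0}(s) = 0$.
\item The torus convention for the intervals $(x,x+s]$ plays no role.
\item The restriction $s \in [0,1/2]$ is not actually needed.
\end{itemize}
Like Lemmas \ref{lemma_lip1}--\ref{lemma_lip3}, the statement is purely deterministic, so no probabilistic input from the earlier sections is required.
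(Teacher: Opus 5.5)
Your proposal is correct and matches the paper's proof: split the counts over the index sets $\{1,\dots,N\}$ and $\{N+1,\dots,M\}$, expand the square to get $V_M(s) = V_N(s) + V_{N+1,M-N}(s) + 2\int_0^1(\cdots)(\cdots)\,dx$, and bound the cross term by Cauchy--Schwarz. Your side remarks are also accurate: the case $M=N$ is trivial, and neither the torus convention nor the restriction $s \le 1/2$ is actually used.
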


\begin{proof}
\begin{eqnarray}
    V_M(s) & = & \int_0^1 \left( S_M(s) - Ms \right)^2 ~dx \nonumber \\
& = & \int_0^1 \left(  \sum_{n=1}^M \mathbf{1}_{[x,x+s]}(X_n) - Ms \right)^2 ~dx \nonumber \\
& = & \int_0^1 \left( \sum_{n=1}^N \mathbf{1}_{[x,x+s]}(X_n) - Ns+ \sum_{n=N+1}^M \mathbf{1}_{[x,x+s]}(X_n) - (M-N)s\right)^2 ~dx \nonumber\\
& = & V_N(s)+V_{N+1,M-N}(s)+ \nonumber \\
& & \qquad  +2\int_0^1 \left( \sum_{n=1}^N \mathbf{1}_{[x,x+s]}(X_n) - Ns\right)\left(\sum_{n=N+1}^M \mathbf{1}_{[x,x+s]}(X_n) - (M-N)s \right) ~dx . \label{prod2}
\end{eqnarray}
Applying Cauchy-Schwarz inequality to (\ref{prod2}), we obtain 
\begin{eqnarray*}
    \left| V_N(s) - V_M(s) \right| \leq  V_{N+1,M-N} (s)  + 2 \sqrt{V_N(s)  V_{N+1,M-N} (s) }.
\end{eqnarray*}
\end{proof}

\begin{proof}[Proof of Lemma \ref{lem_th2_large_a}]
Assume that $k$ is a ``large'' positive integer, and that $N \in \{2^{k-1}, \dots, 2^k\}$. We consider all values of $s$ of the form 
$$
s_v = v 2^{-4k/5}, \quad 1 \leq v \leq 3 \cdot \frac{2^{4k/5}}{k^2}.
$$
Then for every $s$ in the range $[N^{-3/4}, (\log N)^{-2}]$ there exists a $v$ in the range specified above such that
\begin{equation} \label{s_approx}
|s - s_v| \leq 2^{-4k/5}.
\end{equation}
To see this, note that
$$
\frac{1}{(\log N)^2} \leq \frac{1}{(\log 2^{k-1})^2} \leq \frac{1}{(\log 2)^2} \frac{1}{(k-1)^2} \leq \frac{3}{k^2}
$$
for sufficiently large $k$. \\

For given $k$ and for $N$ and $v$ in the ranges specified above we define sets 
$$
A_{k,v,N} = \left\{ \left| N \int_0^1 \left( B(x + s_v) - B(x) \right)^2 dx - s_v(1-s_v) N \right| > s_v^{11/10} N \right\}, 
$$
where $B$ is the (periodized) Brownian Bridge process which comes from the KMT-approximation, as described in Section \ref{sec_KMT}. As already noted the KMT approximation constructs different Brownian Bridges for different values of $N$, but since we are only interested in distributions and exceptional probabilities (for a given value of $N$), we will suppress this dependence in the sequel and simply write $B$ rather than $B_N$. By Lemma \ref{lem_H_tail} we have
\begin{equation} \label{size_of_prob}
\p (A_{k,v,N}) \leq 2 e^{-\frac{s_v^{22/10}}{4 s_v^3}} = 2 e^{-\frac{1}{4 s_v^{8/10}}} \leq 2 e^{-\frac{(k^2/3)^{8/10}}{4}} = 2 e^{-\frac{k^{8/5}}{10}}.
\end{equation}
Note that Lemma \ref{lem_H_tail} is indeed applicable since $s_v^{11/10} \leq s_v\left( \frac{3}{k^2} \right)^{1/10} \leq s_v/100$ is satisfied for sufficiently large $k$. From \eqref{size_of_prob} one can easily see that
\begin{equation} \label{poly_sum}
\sum_{k=1}^\infty \sum_{N = 2^{k-1}}^{2^k} \sum_{v=1}^{3 \cdot \frac{2^{4k/5}}{k^2}} \p (A_{k,v,N}) < + \infty, 
\end{equation}
so that accordingly by the Borel-Cantelli lemma with probability one only finitely many events $A_{k,v,N}$ occur. Now assume that we are in the almost sure ``generic'' situation, where only finitely many events $A_{k,v,N}$ have occurred. Assume that $N \in \{2^{k-1}, \dots, 2^k\}$ for $k$ so large that no event $A_{k,v,N}$ has occurred (for any admissible value of $v$). Let $s \in [N^{-3/4}, (\log N)^{-2}]$ be arbitrary. Since $A_{k,v,N}$ has not occurred we can assume that
$$
\left| N \int_0^1 \left( B(x + s_v) - B(x) \right)^2 dx - s_v N \right| \leq s_v^{11/10} N,
$$
and as a consequence also
$$
\int_0^1 \left( B(x + s_v) - B(x) \right)^2 dx \leq 2 s_v
$$
for the Brownian Bridge from the KMT approximation. Applying \eqref{KMT_error_2} and the Cauchy--Schwarz inequality we have, for some absolute constant $c$, 
\begin{eqnarray}
&& \left| V_N(s_v) - N s_v (1-s_v) \right| \label{V_diff} \\
& \leq & s_v^{11/10} N + \left| V_N(s_v) - N \int_0^1 \left( B(x+s_v) - B(x) \right)^2 dx \right| \nonumber\\
&\leq & s_v^{11/10} N + 4  c \log N \sqrt{N} \int_0^1 \left| B(x+s_v) - B(x) \right| dx + 4c^2 (\log N)^2 \nonumber\\
& \leq & s_v^{11/10} N + 4  c \log N \sqrt{N} \left(\int_0^1 \left( B(x+s_v) - B(x) \right)^2 dx \right)^{1/2} + 4 c^2(\log N)^2 \nonumber\\
& \leq & s_v^{11/10} N + 4  c \log N \sqrt{N} \sqrt{ 2s_v} + 4 c^2 (\log N)^2 \nonumber\\
& = & \mathcal{O} \left( \frac{s_v N}{(\log N)^{1/5}} \right) \nonumber
\end{eqnarray}
by our assumption on the possible size of $s_v$, where the implied constant does not depend on $s_v$ or $N$. 
This clearly also implies
\begin{equation}
\left| V_N(s_v) - N s_v \right| =  \mathcal{O} \left( \frac{s_v N}{(\log N)^{1/5}} \right), \label{continue_here}
\end{equation}
since the $N s_v^2$ term from \eqref{V_diff} can be integrated in the $\mathcal{O} \left( \frac{s_v N}{(\log N)^{1/5}} \right)$ error in the given range of $s_v$. We acknowledge the fact that the KMT approximation in \eqref{KMT_error_2} has an exceptional probability of order $N^{-100}$ (which in terms of $k$ is order roughly $2^{-100k}$), but since there are at most $2^{2k}$ many values of $N$ and $v$ to be considered, the Borel--Cantelli lemma easily shows that the transition from $V_N$ to $B$ in the calculation above is almost surely admissible. From \eqref{continue_here} we can in particular conclude that
$$
V_N(s_v) \leq 2 s_v N
$$
for all $v$ in the admissible range. Now we move from $V_N(s_v)$ to $V_N(s)$. By Lemma \ref{lemma_lip3} we have 
\begin{equation} \label{from_this_line}
\left| V_N(s) - V_N(s_v) \right| \leq V_N(|s - s_v|) + 2 \sqrt{V_N(s_v) V_N(|s-s_v|)}.
\end{equation}
Since by \eqref{s_approx} we have $|s - s_v| \leq 2^{-4k/5} \leq N^{-3/4}$ (for sufficiently large $k$), by Lemma \ref{lem_th2_small} we
have 
$$
V_N(|s-s_v|) \leq 2 N |s-s_v| \leq 2 N 2^{-4k/5},
$$
almost surely for sufficiently large $k$, and thus 
$$
\left| V_N(s) - V_N(s_v) \right| \leq 2 N |s-s_v|  + 2 \sqrt{2 N s_v 2 N |s-s_v|} = \mathcal{O} \left( N 2^{-4k/5} + N \sqrt{s_v} 2^{-2k/5} \right)
$$
almost surely for sufficiently large $k$. Thus, taking \eqref{continue_here} into account, and using that $2^k$ is of order $N$, together with the assumption that $s \geq N^{-3/4}$, we have
\begin{eqnarray}
\left|V_N(s) - Ns \right| & \leq & \left| V_N(s) - V_N(s_v) \right| + \left| V_N(s_v) - N s_v \right| + N|s - s_v| \nonumber\\
& = & \mathcal{O} \left(N \sqrt{s_v} 2^{-2k/5}+ \frac{s_v N}{(\log N)^{1/5}} + N 2^{-4k/5}  \right) \nonumber\\
& = & \mathcal{O} \left( N \sqrt{s} 2^{-2k/5}+\frac{s N}{(\log N)^{1/5}} + N 2^{-4k/5}  \right) \nonumber\\
& = & \mathcal{O} \left( N s N^{3/8} 2^{-2k/5} +\frac{s N}{(\log N)^{1/5}} + N 2^{-4k/5} \right) \nonumber\\
& = & \mathcal{O} \left( N s N^{-1/40}+\frac{s N}{(\log N)^{1/5}} + N s N^{-1/20}  \right) \nonumber\\
& = & \mathcal{O} \left( \frac{s N}{(\log N)^{1/5}} \right) = o(sN) \label{to_this_line}
\end{eqnarray}
almost surely, where the implied constant is independent of $s$ and $N$. This proves Lemma \ref{lem_th2_large_a}.
\end{proof}

The proof of Lemma \ref{lem_th2_large_b} is similar to that of Lemma \ref{lem_th2_large_a}. However, the situation is more delicate. In the previous proof, the exceptional probabilities coming from the exponential inequality were of very small order (super-polynomial order), so that in \eqref{poly_sum} we could afford to take a sum over all possible values of $N$. Now in the situation of Lemma \ref{lem_th2_large_b}, the exceptional probabilities are in some regimes only of logarithmic order, and we cannot afford to simply take a union bound over all possible values of $N$. Instead, we need to sparsify the set of relevant values of the index $N$ with the help of Lemma \ref{lemma_lip4}.

\begin{proof}[Proof of Lemma \ref{lem_th2_large_b}]
Again we assume that $k$ is a ``large'' positive integer. We set $$g_k=\max\limits_{2^{k-1}\leq N\leq 2^k}g(N).$$ We first consider all values of $s$ of the form
$$
s_v = \frac{v}{k^4}, \qquad k^2 \leq v \leq \frac{2 g_k k^4}{\log k},
$$
and all values of $N$ of the form 
$$
N_j = 2^{k-1} + j \frac{2^{k-1}}{k^4}, \qquad 0 \leq j \leq k^4. 
$$
Then for all $N \in \{2^{k-1}, \dots, 2^k\}$ and for all $s \in [(\log N)^{-2}, g(N) / (\log \log N)]$ there is a $v$ in the admissible range such that 
\begin{equation} \label{svs}
|s - s_v| \leq k^{-4} = \mathcal{O} \left( (\log N)^{-4} \right). 
\end{equation}
Note here that the largest admissible value of $s_v$, for $v = \frac{2 g_k k^4}{\log k}$, is 
$$
s_v = \frac{2 g_k}{\log k} \geq \frac{g_k}{\log \log N} \geq \frac{g(N)}{\log \log N}
$$
for sufficiently large $N$, since $\log k \sim \log \log N$ as $N \to \infty$. Thus, the discretized points $s_v, ~k^2 \leq v \leq \frac{2 g_k k^4}{\log k}$ indeed cover the whole relevant range $[(\log N)^{-2}, g(N)/\log \log N]$.\\

Similar to the previous proof, we define sets 
$$
A_{k,v,j} = \left\{ \left| N_j \int_0^1 \left( B(x + s_v) - B(x) \right)^2 dx - s_v (1-s_v) N_j \right| > g_k^{1/4} s_v N_j \right\},
$$
where $B$ is the Brownian Bridge coming from the KMT approximation (and where we again suppress the dependence of $B$ on $N$ since we are only interested in distributions). Using $s_v \leq \frac{2 g_k}{\log k}$ the exponential inequality in Lemma \ref{lem_H_tail} yields
$$
\p (A_{k,v,j}) \leq 2 e^{-\frac{\sqrt{g_k} s_v^2}{4 s_v^3}} = 2 e^{-\frac{\log k}{8 \sqrt{g_k}}} = \frac{2}{k^{1/(8 \sqrt{g_k})}}
$$
for sufficiently large $k$. Note that Lemma \ref{lem_H_tail} is indeed applicable since $g_k^{1/4} s_v \leq s_v/100$ for sufficiently large $k$, due to our assumption that $g_k \to 0$ as $k \to \infty$. Again using the fact that $g_k \to 0$ by assumption we have
$$
\sum_{k=1}^\infty \sum_{j=0}^{k^4} \sum_{v=k^2}^{\frac{2 g_k k^4}{\log k}} \frac{2}{k^{1/(8 \sqrt{g_k})}} < +\infty.
$$
Thus, by the convergence Borel--Cantelli lemma, almost surely only finitely many events $A_{k,v,j}$ occur. Essentially, this is what we need to prove (with the help of the approximation Lemma \ref{lemma_lip3}) that $V_{N_j}(s) \sim s N_j$ almost surely, uniformly  throughout the relevant range of $s$, similar to the proof of Lemma \ref{lem_th2_large_a}. However, now we also need a further ingredient, which allows us to handle all possible values of $N$, and close the gaps between the sparsified index set of the $N_j$'s. To handle this, we also define sets $B_{k,v,j,r}$ that control the fluctuation of $V_N$ for $N$ between two points $N_j$ and $N_{j+1}$ from our sparsified set of indices. We write $N$ as $N_j + r$ for some suitable $r$, and use Lemma \ref{lemma_lip4} to relate $V_N$ to $V_{N_j}$ and $V_{N_j + 1,r}$. We don't require the full precision of Lemma \ref{lem_H_tail} here (which comes with a bound on the maximal value of the parameter $x$ in the large deviations estimate, which would actually prevent us from using the lemma in this situation).  More precisely, we define 
$$
B_{k,v,j,r} =  \left\{ r  \max_{x \in [0,1]} B(x)^2 > s_v^{3/2} 2^k \right\},
$$
where we use the Brownian motion $B$ which the KMT approximation constructs to approximate the empirical distribution of the points $X_{N_j+1}, \dots, X_{N_j + r}$. Here $v$ and $j$ run through the parameter ranges as specified above, and $r$ runs through the range $1 \leq r < 2^{k-1}/k^4$. 
The probability of $B_{k,v,j,r}$ can be easily estimated from the formula for the distribution of the maximum of the Brownian bridge (which is known as ``Kolmogorov's distribution'', and is at the basis of the Kolmogorov--Smirnov test). Writing $u = \frac{s_v^{3/2} 2^k}{2r}$ we obtain
\begin{eqnarray}
\p (B_{k,v,j,r}) & \leq & \p \left( \max_{x \in [0,1]} B(x)^2 > \frac{s_v^{3/2} 2^k}{2r} \right) \label{bridge_max}\\
& = & 2 \sum_{m=1}^\infty (-1)^{m-1} e^{-2m^2 u}. \nonumber
\end{eqnarray}
This is an alternating series whose terms are decreasing in absolute value, and since such a series is dominated by its first term we have
\begin{eqnarray}
\p (B_{k,v,j,r}) & \leq & 2 e^{-2 u} = 2 e^{-\frac{s_v^{3/2} 2^{k}}{r}} \leq 2 e^{-s_v^{3/2} k^4} \leq 2 e^{-k}. \label{bridge_max_2}
\end{eqnarray}
Accordingly, summing over $k$ and all permissible parameters for $v,j,r$, we have
\begin{eqnarray*}
\sum_{k=1}^\infty \sum_{k^2 \leq v \leq 2 g_k k^4 / \log k} \sum_{j=0}^{k^4} \sum_{1 \leq r < 2^{k-1}/k^4} \p (B_{k,v,j,r}) & \leq & \sum_{k=1}^\infty 2^k k^4 2 e^{-k} < + \infty.
\end{eqnarray*}
Thus, according to the Borel-Cantelli lemma, with probability one only finitely many events $B_{k,v,j,r}$ occur, which will allow us to control the (almost sure) size of $V_{N_j+1,r}$. More precisely, if $B_{k,v,j,r}$ has not occurred, then by \eqref{KMT_error_2}  we have
\begin{eqnarray}
& & V_{N_j+1,r}(s_v) \nonumber\\
& \leq & 4r  \max_{x \in [0,1]} B(x)^2  + 8c \log r \sqrt{r}  \max_{x \in [0,1]} |B(x)| + 4 c^2 (\log r)^2 \nonumber\\
& \leq & 2 s_v^{3/2} 2^k +  4\sqrt{2}c \log r  s_v^{3/4} 2^{k/2} + 4 c^2 (\log r)^2 \nonumber\\
& \leq & \mathcal{O}(s_v^{3/2} 2^k),  \label{bound_for_jrs}
\end{eqnarray}
uniformly in $s$ and $N$. As in the proof of Lemma \ref{lem_th2_large_a}, we acknowledge the fact that the KMT approximation is only valid up to a certain exceptional probability. When applying the KMT approximation for very small value of $r$, these exceptional probabilities (of order $r^{-100}$) are not necessarily small. However, this can be easily fixed. For values of $r$ which are reasonably large, say $r \geq 2^{k/4}$, we apply the KMT approximation as indicated above; this introduces exceptional probabilities of order at most $r^{-100}  \leq 2^{-25 k}$, which are easily incorporated into a convergent series and taken care of by the Borel--Cantelli lemma. For very small value of $r$, say $r \leq 2^{k/4}$, we have the trivial upper bound $V_{N_j+1,r} (s_v) \leq r^2 s_v$, which is easily seen to also imply \eqref{bound_for_jrs}.\\

Now assume that we are in the ``typical'' situation where we have a $k$ (for $k$ large enough) such that no event $A_{k,v,j}$ or $B_{k,v,j,r}$ has occurred, for any of the admissible values of $v,j,r$. Let $N \in \{2^{k-1}, \dots, 2^k\}$ be given, and let $s$ be given with $(\log N)^{-2} \leq s \leq g(N) / \log \log N$. There are $j$ and $r$ (within the range of admissible values) such that $N = N_j + r$, and by \eqref{svs} there is $s_v$ for an admissible value of $v$ such that $|s - s_v| \leq k^{-4}$. Using $N = N_j + r$, by Lemma \ref{lemma_lip4} we 
\begin{equation} \label{V_N_and_V_M}
\left| V_N(s_v) - V_{N_j}(s_v)\right| \leq V_{N_j+1,r} (s_v) + 2\sqrt{V_{N_j}(s_v) V_{N_j+1,r} (s_v)}.
\end{equation}
Applying \eqref{KMT_error_2} and the Cauchy--Schwarz inequality,  and using the assumption that $A_{k,v,j}$ has not occurred, we have
\begin{eqnarray}
& & \left| V_{N_j}(s_v) - N_j s_v (1-s_v) \right| \label{V_diff_2} \\
& \leq & g_k^{1/4} s_v N_j + \left| V_{N_j}(s_v) - N_j \int_0^1 \left( B(x+s_v) - B(x) \right)^2 dx \right| \nonumber\\
&\leq & g_k^{1/4} s_v N_j + 4  c \log N_j \sqrt{N_j} \int_0^1 \left| B(x+s_v) - B(x) \right| dx + 4c^2 (\log N_j)^2 \nonumber\\
& \leq & g_k^{1/4} s_v N_j + 4  c \log N_j \sqrt{N_j} \left(\int_0^1 \left( B(x+s_v) - B(x) \right)^2 dx \right)^{1/2} + 4 c^2 (\log N_j)^2 \nonumber\\
& \leq & g_k^{1/4} s_v N_j + 4  c \log N \sqrt{N} g_k^{1/8} \sqrt{s_v} + 4 c^2 (\log N_j)^2 \nonumber\\
& = & o \left(s N \right), \nonumber
\end{eqnarray}
independent of the particular value of $s$ and $N$ (that is, of $v$ and $j$). This also implies
\begin{equation} \label{incombination}
\left| V_{N_j}(s_v) - N_j s_v\right| = o \left(s N \right),
\end{equation}
since the $N_j s_v^2$ term from \eqref{V_diff_2} can be incorporated into the error term $o(sN)$. In particular, $V_{N_j}(s_v) \leq 2 s_v N_j$ for sufficiently large $k$. This also implies Using this is \eqref{V_N_and_V_M}, we get
\begin{equation} \label{plug_into}
\left| V_N(s_v) - V_{N_j}(s_v)\right| \leq V_{N_j+1,r} (s_v) + 2\sqrt{2 s_v N_j V_{N_j+1,r} (s_v)}. 
\end{equation}
Using the assumption that $B_{k,v,j,r}$ has not occurred, we can use the estimate \eqref{bound_for_jrs} from above. Plugging this into \eqref{plug_into} yields
$$
\left| V_N(s_v) - V_{N_j}(s_v)\right| = \mathcal{O}(s^{3/2} N)  + \mathcal{O} \left(s^{5/4} N \right).
$$
In combination with \eqref{incombination} this gives
\begin{equation}  \label{vn_conv}
\left| V_N(s_v) - N s_v \right|\leq |V_N(s_v)-V_{N_j}(s_v)|+|V_{N_j}(s_v)-N_js_v|+|N_js_v-Ns_v|  = o(sN), 
\end{equation}
uniformly in $N$ and $s$. Now it remains to switch from $s_v$ to $s$. This is done using Lemma \ref{lemma_lip3}, and since the argument is completely similar to the one given in lines \eqref{from_this_line}--\eqref{to_this_line}, we omit the details and just note that the discrete estimate \eqref{vn_conv} translates to the continuous estimate 
$$
\left| V_N(s) - N s \right|  = o(sN).
$$
This holds almost surely, uniformly throughout the relevant range of $s$, which proves the assertion of the lemma.
\end{proof}

Lemmas \ref{lem_th2_small}, \ref{lem_th2_large_a} and \ref{lem_th2_large_b} together cover all ranges for the convergence part of Theorem \ref{th_2}.

\section{Almost sure behavior of $V_N(s)$. Proof of Theorem \ref{th_2}, divergence part.} \label{sec_th3}

In this section we will prove the divergence part of Theorem \ref{th_2}. Accordingly, let $\varepsilon>0$ be fixed throughout this section. We will establish that there is an $\eta>0$ such that almost surely there exist infinitely many $N$ such that for $s = \varepsilon / \log \log N$ we have
$$
\left| \frac{V_N(s)}{Ns} - 1\right| > \eta. 
$$
almost surely.\\

Recall that $\varepsilon > 0$ is assumed to be fixed. We set 
$$
\eta = \min \left\{ \frac{1}{5000}, \frac{\varepsilon}{30} \right\}.
$$
Furthermore, we choose an integer $\theta \geq 18$ which is so large that
\begin{equation} \label{choice_theta}
\left( \sqrt{1+2 \eta} - \theta^{-1/4} \right)^2 \geq 1+\eta.
\end{equation}
Let
$$
N_j = \theta^j, \qquad j \geq 1.
$$
Using the notation introduced before the statement of Lemma \ref{lemma_lip4}, we write $V_{N_{j-1}+1,N_j - N_{j-1}}$ for the number variance of the points $X_{N_{j-1}+1},\dots, X_{N_{j}}$. We write 
$$
s_{j} = \frac{\varepsilon}{\log \log  N_{j}},
$$ 
and define sets 
$$
A_j = \left\{  \left| V_{N_{j-1}+1,N_j - N_{j-1}}(s_{j}) - s_j  (N_j - N_{j-1})\right| > 2 \eta s_j (N_j - N_{j-1}) \right\}.
$$
Using again the equations in \eqref{KMT_main}--\eqref{KMT_error_2} to switch from the number variance to the Brownian bridge, we get
\begin{eqnarray*}
    \mathbb{P}(A_j) & \geq & \mathbb{P} \left\{  \left| \int_0^1 \left( B(x+s_j) - B(x)\right)^2 dx - s_j  \right| > 4 \eta s_j \right\} \\
    & & \qquad - \mathbb{P} \left\{ \frac{4 c \log (N_j - N_{j-1})}{\sqrt{N_j - N_{j-1}}} \int_0^1 |B(x+s_j) - B(x)| dx > \eta s_j \right\}
\end{eqnarray*}
(where again we simply ignore the $N^{-100}$ probability from the KMT approximation) for all $j$ which are so large that 
$$
\frac{4 c^2(\log (N_j - N_{j-1}))^2}{N_j - N_{j-1}} \leq \eta s_j.
$$
We have 
\begin{eqnarray*}
& & \mathbb{P} \left\{ \frac{4 c \log (N_j - N_{j-1})}{\sqrt{N_j - N_{j-1}}} \int_0^1 |B(x+s_j) - B(x)| dx > \eta s_j \right\} \\
& \leq & \mathbb{P} \left\{ 2 \max_{x \in [0,1]} |B(x)| > \frac{\eta s_j \sqrt{N_j - N_{j-1}}}{4 c \log (N_j - N_{j-1})} \right\},
\end{eqnarray*}
and a calculation similar to the one in \eqref{bridge_max}--\eqref{bridge_max_2} shows that these probabilities are extremely small (they yield a convergent series when summing over $j$). On the other hand, for sufficiently large $j$ we have
\begin{eqnarray*}
&& \mathbb{P} \left\{  \left| \int_0^1 \left( B(x+s_j) - B(x)\right)^2 dx - s_j  \right| > 4 \eta s_j \right\} \\
& \geq & \mathbb{P} \left\{  \left| \int_0^1 \left( B(x+s_j) - B(x)\right)^2 dx - s_j(1-s_j)  \right| > 5 \eta s_j \right\} \\
& = & \mathbb{P} \left\{  \left|H - s_j(1-s_j)  \right| > 5 \eta s_j \right\}.
\end{eqnarray*}
Now we apply Lemma \ref{lemma_lower_tail} to get a lower bound for this probability. Note that by our choice of $\eta$ we have $5 \eta \leq 1/1000$, so Lemma \ref{lemma_lower_tail} is indeed applicable (assuming also that $j$ is so large that $s_j \leq 1/100$). Note that from our choice of $\eta$ we have
$$
\frac{\eta^2}{\varepsilon^2} = \frac{\left(\min \left\{ \frac{1}{5000}, \frac{\varepsilon}{30}\right\} \right)^2}{\varepsilon^2} \leq \frac{1}{900}.
$$
Thus from Lemma \ref{lemma_lower_tail} we get, for all sufficiently large $j$, that
\begin{eqnarray*}
\mathbb{P} \left\{  \left|H - s_j(1-s_j)  \right| > 5 \eta s_j \right\} & \geq &\min\left\{ \frac{ 5^4 \eta^4}{ s_j^2} ,\frac{1}{4}\right\}e^{-\frac{17\cdot 25 \eta^2}{s_j}} \\
& \geq & \tfrac{1}{4}e^{-\frac{425 \eta^2 \log \log \theta^j}{\varepsilon^2}} \\
& = & \tfrac{1}{4}e^{-\frac{425 \log \log \theta^j}{900}} \\
& = & \frac{1}{4(j \log \theta)^{\frac{17}{36}}},
\end{eqnarray*}
which leads to a divergent series when summing over $j$. Overall this shows that
$$
\sum_{j=1}^\infty \mathbb{P} (A_j) = + \infty.
$$
By construction, the events $(A_j)_{j \geq 1}$ are independent, since each $A_j$ only depends on the random variables $X_{N_{j-1} + 1}, \dots, X_{N_j}$. Thus, by the divergence Borel--Cantelli lemma, almost surely there exist infinitely many values of $j$ such that the event $A_j$ occurs. Now assume that $A_j$ has occurred. Note that $A_j$ makes a statement about $V_{N_{j-1}+1,N_j - N_{j-1}}$, i.e.\ about the number variance of $X_{N_{j-1}+1},\dots,X_{N_j}$, but we are actually interested in $V_{N_j}$, i.e.\ the number variance of the full point set $X_1, \dots, X_{N_j}$. Note that $V_{N_{j-1}+1,N_{j} - N_{j-1}}$ interacts with $V_{N_{j-1}}$ to give $V_{N_j}$. We will now create a dichotomy: We have shown that infinitely often the ``increment'' $V_{N_{j-1}+1,N_{j} - N_{j-1}}$ is large. If $V_{N_{j-1}}$ is small, then we will be able to conclude that $V_{N_j}$ is also large (almost as large as the increment). In contrast, if $V_{N_{j-1}}$ is sufficiently large to compensate for $V_{N_{j-1}+1,N_{j} - N_{j-1}}$ and make $V_{N_j}$ small, then we have also found an index where the number variance is large: not $N_j$, but $N_{j-1}$. In either case, the threshold is exceeded infinitely often.\\

Recall equation \eqref{prod2} from the proof of Lemma \ref{lemma_lip4}. Applying Cauchy--Schwarz as in that proof, but only after bringing $V_N(s)$ and the integral to the left-hand side, and re-writing the resulting inequality in our present setup (where we have $N = N_{j-1}$, $M = N_j$, $M-N = N_j - N_{j-1}$, and $s= s_j$) we have
\begin{eqnarray} 
V_{N_j}(s_j) & \geq & V_{N_{j-1}}(s_j) + V_{N_{j-1}+1,N_j - N_{j-1}} (s_j) - 2 \sqrt{V_{N_{j-1}}(s_j) V_{N_{j-1}+1,N_j - N_{j-1}} (s_j)} \nonumber\\
& = & \left( \sqrt{V_{N_{j-1}+1,N_j - N_{j-1}} (s_j)} -  \sqrt{V_{N_{j-1}}(s_j)} \right)^2.\label{all_sorts_of_variances}
\end{eqnarray}
Note how \eqref{all_sorts_of_variances} affirms the heuristic from above: if we know that $V_{N_{j-1}+1,N_j - N_{j-1}} (s_j)$ is large, then necessarily at least one of the other two terms in the formula must be large as well. To make this precise, we define sets
\begin{equation} \label{avoid_this}
B_j := \left\{ V_{N_{j-1}} (s_j) < \frac{s_j (N_j - N_{j-1})}{\sqrt{\theta}} \right\}.
\end{equation}
Then, writing $B_j^c$ for the complement of $B_j$, we have
\begin{eqnarray*}
& & \p \left( (A_j \cap B_j) \cup B_j^c \quad \text{occurs for infinitely many $j$} \right)\\
& \geq & \p ( (A_j \cap B_j) \cup (A_j \cap B_j^c) \quad \text{occurs for infinitely many $j$}) \\
& = & \p ( A_j \quad \text{occurs for infinitely many $j$}) \\
& = & 1.
\end{eqnarray*}

If $A_j$ and $B_j$ have jointly occurred infinitely often, then for infinitely many $j$ we have
$$
V_{N_{j-1}+1,N_j - N_{j-1}} (s_j) \geq (1 + 2 \eta ) s_j (N_j - N_{j-1})
$$
by our definition of $A_j$, and by \eqref{all_sorts_of_variances} we have
\begin{eqnarray*}
V_{N_j}(s_j) & \geq & \left( \sqrt{1+2 \eta} - \theta^{-1/4} \right)^2  s_j (N_j - N_{j-1}) \\
& \geq & \left( 1 + \eta \right)  s_j N_j 
\end{eqnarray*}
for infinitely many $j$ as a consequence of our choice of $\theta$ in \eqref{choice_theta}. Thus the conclusion of the divergence part of Theorem \ref{th_2} is satisfied in this case. 

It remains to show that the conclusion of the divergence part of Theorem \ref{th_2} is satisfied in the case when $B_j^c$ has occurred for infinitely many $j$. In other words, we may assume that for infinitely many $j$ we have
$$
V_{N_{j-1}} (s_j) \geq \frac{s_j (N_j - N_{j-1})}{\sqrt{\theta}},
$$
which can be re-written as
\begin{equation} \label{B_c_has_happened}
V_{N_{j-1}} (s_j) \geq s_j N_{j-1} \frac{\theta-1}{\sqrt{\theta}}.
\end{equation}
This is very similar to what we need to establish for the divergence part of Theorem \ref{th_2}, only that for the conclusion of the theorem we would need a statement about $V_{N_{j-1}} (s_{j-1})$, not about $V_{N_{j-1}}(s_j)$. Thus we need to change the argument of $V_{N_{j-1}}$ from $s_j$ to $s_{j-1}$. It is an easy exercise (using derivatives) to show that
\begin{equation}\label{diff_sj}
s_j \leq s_{j-1} \leq s_j + \frac{\varepsilon \theta}{(\log (\theta^{j-1}))(\log \log (\theta^{j-1}))^2}.
\end{equation}
By Lemma \ref{lemma_lip3} we have
\begin{eqnarray} 
& & \left| V_{N_{j-1}}(s_j) - V_{N_{j-1}}(s_{j-1}) \right| \nonumber\\
& \leq & V_{N_{j-1}} \left( \left| s_j- s_{j-1} \right| \right) + 2 \sqrt{V_{N_{j-1}}(s_{j-1}) V_{N_{j-1}}(|s_j-s_{j-1}|)}. \label{s_j_s_j-1}
\end{eqnarray}
Now by \eqref{diff_sj} we have
$$
\left| s_{j-1} - s_j \right| \leq \frac{\varepsilon \theta}{(\log (\theta^{j-1}))(\log \log (\theta^{j-1}))^2} \leq \frac{1}{\log N_{j-1}}
$$
for sufficiently large $j$, so that the number variance of $V_{N_{j-1}} \left( \left| s_j- s_{j-1} \right| \right)$ easily falls within the convergence part of Theorem \ref{th_2}. Accordingly, almost surely we have 
$$
V_{N_{j-1}} \left( \left| s_j- s_{j-1} \right| \right) \leq 2 N_{j-1} |s_j - s_{j-1}| \leq \frac{2 N_{j-1}}{\log N_{j-1}}
$$
for all sufficiently large $j$. Thus if \eqref{B_c_has_happened} is true, then by \eqref{s_j_s_j-1} we have
\begin{eqnarray}
V_{N_{j-1}}(s_{j-1}) & \geq & V_{N_{j-1}}(s_j) - \frac{2 N_{j-1}}{\log N_{j-1}} - 2 \sqrt{\frac{2 V_{N_{j-1}}(s_{j-1}) N_{j-1}}{\log N_{j-1}}} \nonumber\\
& \geq & s_j N_{j-1} \frac{\theta-1}{\sqrt{\theta}} - \frac{2 N_{j-1}}{\log N_{j-1}} - 2 \sqrt{\frac{2 V_{N_{j-1}}(s_{j-1}) N_{j-1}}{\log N_{j-1}}} \label{plug_in_here}
\end{eqnarray}
for infinitely many $j$ (up to a set of probability zero). If we assume
\begin{equation} \label{assume_this}
V_{N_{j-1}}(s_{j-1}) \leq s_j N_{j-1} \frac{\theta-1}{\sqrt{\theta}},
\end{equation}
then we can plug this into \eqref{plug_in_here} and get
\begin{eqnarray}
V_{N_{j-1}}(s_{j-1}) & \geq & s_j N_{j-1} \frac{\theta-1}{\sqrt{\theta}} - \frac{2 N_{j-1}}{\log N_{j-1}} - 2 N_{j-1}\sqrt{\frac{2 s_j (\theta-1)} {\sqrt{\theta}\log N_{j-1}}} \nonumber\\
& \geq &  s_{j-1} N_{j-1} \frac{\theta-1}{2\sqrt{\theta}} \label{last_two}
\end{eqnarray}
(since the last two terms on the right-hand side of \eqref{last_two} are of smaller asymptotic order than the first term on the right-hand side, and since it is easily seen that $s_{j-1} / s_j \to 1$ as $j \to \infty$). On the other hand, if \eqref{assume_this} fails to be true, then we directly obtain
$$
V_{N_{j-1}}(s_{j-1}) \geq s_j N_{j-1} \frac{\theta-1}{\sqrt{\theta}} \geq s_{j-1} N_{j-1} \frac{\theta-1}{2\sqrt{\theta}}
$$
since $s_j \sim s_{j-1}$. Thus in both case we have
$$
V_{N_{j-1}}(s_{j-1}) \geq s_{j-1} N_{j-1} \frac{\theta-1}{2\sqrt{\theta}}
$$
for infinitely many values of $j$. Since $\theta \geq 18$ by assumption, we have $\frac{\theta-1}{2 \sqrt{\theta}} \geq 2 \geq 1 + \eta$ (since $\eta$ was chosen smaller than $1$). Thus if $B_j^c$ has occured for infinitely many $j$, then we have 
$$
V_{N_{j-1}}(s_{j-1}) \geq (1+\eta) s_{j-1} N_{j-1}
$$
for infinitely many values of $j$ (up to a set of probability zero). This proves the remaining case of the divergence part of Theorem \ref{th_2}.

\section*{Acknowledgments}

CA was supported by the Austrian Science Fund (FWF), projects 10.55776/I5554, 10.55776/P34763, 10.55776/P35322, and 10.55776/PAT5120424. MM was supported by the Austrian Science Fund (FWF), project 10.55776/DOC183. SH was supported by the Austrian Science Fund (FWF), project 10.55776/P35520. CA and MM were both supported by Austria's Agency for Education and Internationalization (OeAD), project BG 07/2025 ``Linear Programming Methods for Energy and Designs''.

\end{document}